\documentclass{article}

\usepackage{microtype}
\usepackage{graphicx}
\usepackage{multirow}
\usepackage{subcaption}
\usepackage{booktabs} 
\usepackage{subcaption}
\usepackage{threeparttable}
\usepackage{hyperref}

\usepackage[preprint]{icml2026}

\usepackage{amsmath}
\usepackage{amssymb}
\usepackage{mathtools}
\usepackage{amsthm}

\usepackage{multirow}
\usepackage{graphicx}
\usepackage{array}

\usepackage[capitalize,noabbrev]{cleveref}

\theoremstyle{plain}
\newtheorem{theorem}{Theorem}[section]

\newtheorem{lemma}[theorem]{Lemma}
\newtheorem{corollary}[theorem]{Corollary}
\theoremstyle{definition}

\newtheorem{assumption}[theorem]{Assumption}
\theoremstyle{remark}

\usepackage[textsize=tiny]{todonotes}
\icmltitlerunning{DePCoN for Robust Parameter Estimation in Non-autonomous System with Discontinuous Inputs}

\begin{document}
\twocolumn[
  \icmltitle{Deep Predictor-Corrector Networks for Robust Parameter Estimation in Non-autonomous System with Discontinuous Inputs}



  \icmlsetsymbol{equal}{*}
  \icmlsetsymbol{corre}{$\dagger$}

  \begin{icmlauthorlist}
    \icmlauthor{Gyeongwan Gu}{ajou,equal}
    \icmlauthor{Jinwoo Hyun}{ibs,equal}
    \icmlauthor{Hyeontae Jo}{ajou,ibs,corre}
    \icmlauthor{Jae Kyoung Kim}{ibs,kaist,korea,corre}
  \end{icmlauthorlist}

  \icmlaffiliation{ajou}{Department of Mathematics, Ajou University, Suwon 16499, Republic of Korea}
  \icmlaffiliation{ibs}{Biomedical Mathematics Group, Institute for Basic Science, Daejeon 34126, Republic of Korea}
  \icmlaffiliation{kaist}{Deparment of Mathematical Sciences, KAIST, Daejeon 34141, Republic of Korea }
  \icmlaffiliation{korea}{Department of Medicine, College of Medicine, Korea University, Seoul 02841, Republic of Korea}

  \icmlcorrespondingauthor{Hyeontae Jo}{ajouhtj@ajou.ac.kr}
  \icmlcorrespondingauthor{Jae Kyoung Kim}{jaekkim@kaist.ac.kr}

  \icmlkeywords{Non-smooth optimization, Non-autonomous differential equations, Discontinuous inputs, Parameter estimation, Kernel smoothing, Deep learning}

  \vskip 0.3in
]




\printAffiliationsAndNotice{}  


\begin{abstract}
Learning under non-smooth objectives remains a fundamental challenge in machine learning, as abrupt changes in conditioning variables can induce highly non-smooth loss landscapes and destabilize optimization.
This difficulty is particularly pronounced in non-autonomous dynamical systems driven by discontinuous inputs, where widely used optimization methods, including recent neural smoothing approaches, exhibit unreliable convergence or strong hyperparameter sensitivity.
To address this issue, we propose Deep Predictor--Corrector Networks (DePCoN), a multi-scale learning framework that stabilizes optimization by learning scale-consistent parameter update rules across a hierarchy of smoothed inputs.
Rather than treating smoothing as a fixed preprocessing choice, DePCoN integrates smoothing into the learning dynamics itself through a learned predictor--corrector mechanism.
Across biological and ecological benchmarks with discontinuous inputs, DePCoN consistently achieves more robust and faster convergence than existing methods while substantially reducing sensitivity to hyperparameter choices.
Beyond dynamical systems, our approach provides a general learning principle for stabilizing optimization under non-smooth objectives.
\end{abstract}

\section{Introduction}
Many machine learning models fail not only because of insufficient expressiveness, but also because of ill-posed optimization problem under non-smooth objectives \cite{Goffin1977, Nesterov2005}.
In many practical settings, abrupt changes in inputs or conditioning variables can induce highly non-smooth loss landscapes, rendering widely used optimization methods unstable and unreliable.
This challenge arises across a wide range of learning problems and persists even in low-dimensional settings.

\begin{table}[t!]
\caption{Representative applications of non-autonomous dynamical systems driven by discontinuous exogenous inputs.}
\label{tab:nonauto_appl_summary}
\centering
\resizebox{\linewidth}{!}{%
\begin{tabular}{lll}
\toprule
Category & Application & \begin{tabular}[c]{@{}c@{}}Input type (Reference)\end{tabular} \\
\midrule
Control & Traffic signals & \begin{tabular}[l]{@{}l@{}}Periodic red\/green switching\\ \cite{33aboudolas2009store}\end{tabular}
\\ \cmidrule(rl){1-3}
Bio/Life & \begin{tabular}[l]{@{}l@{}}NF-$\kappa$B signaling \\ pathways\end{tabular} & \begin{tabular}[l]{@{}l@{}}External stimuli (e.g. TNF-$\alpha$) \\ \cite{41nelson2004oscillations}\end{tabular} \\ \cmidrule(rl){1-3}
\begin{tabular}[l]{@{}l@{}}Electronic\\engineering\end{tabular}  & Power electronics & \begin{tabular}[l]{@{}l@{}}Component faults \\\cite{22dominguez2010detection}\end{tabular}\\ \cmidrule(rl){1-3}
Finance & Financial control & \begin{tabular}[l]{@{}l@{}}Central bank interventions \\\cite{19cadenillas2000classical}\end{tabular}\\ \cmidrule(rl){1-3}
Neural networks & Neural dynamics & \begin{tabular}[l]{@{}l@{}}Spiking synaptic activity \\\cite{10stamov2007almost}\end{tabular} \\
\cmidrule(rl){1-3}
Epidemiology & \begin{tabular}[l]{@{}l@{}}Non-pharmaceutical\\interventions\end{tabular} & \begin{tabular}[c]{@{}c@{}}Discrete policy shocks\\\cite{40flaxman2020estimating}\end{tabular} \\
\bottomrule
\end{tabular}}
\vskip -0.2in
\end{table}

A prominent instance of this challenge arises in non-autonomous differential equations, which are widely used to describe natural and engineered systems driven by time-varying exogenous inputs~\cite{g1aastrom2021feedback, g2Coppel1978Dichotomies}.
Such models support a broad range of tasks, including prediction, control, system design, optimization, and the evaluation of interventions or policy scenarios across scientific and engineering domains.
In many real-world systems, these exogenous inputs are inherently non-smooth, as they are generated by switching rules, pulsed actions, or event-triggered decisions.
This setting is encountered, for example, in control systems with red--green traffic-light switching or event-based demand response~\cite{33aboudolas2009store,42du2014positive,34mathieu2012using,36chassin2015new}, in power electronics with high-frequency on--off switching~\cite{43papafotiou2004hybrid,45siddhartha2018non}, and in biological and circadian models involving bolus dosing, abrupt external stimulation, or light--dark forcing~\cite{15huang2012modeling,41nelson2004oscillations,39forger1999simpler,song2023real,44lim2025enhanced} (See \cref{tab:nonauto_appl_summary} and \cref{tab:nonauto_appl_full} for representative examples).

Recent work has attempted to mitigate this issue by smoothing discontinuous inputs using kernels or neural network approximations and performing parameter estimation on the resulting surrogate dynamics~\cite{beck2012smoothing, yang2014mollification, lin2023continuation, ko2023homotopy, xu2024global}.
While such approaches can improve numerical stability in certain cases, they still rely on selecting a single smoothing scale or network configuration, making performance highly sensitive to the hyperparameter choice: insufficient smoothing fails to stabilize optimization, whereas excessive smoothing distorts the underlying dynamics and biases the recovered parameters.
As a consequence, existing smoothing-based methods often trade optimization stability for estimation bias rather than resolving the core optimization difficulty.
\begin{figure*}[ht!]
  \begin{center}
    \centerline{\includegraphics[width=0.85\linewidth]{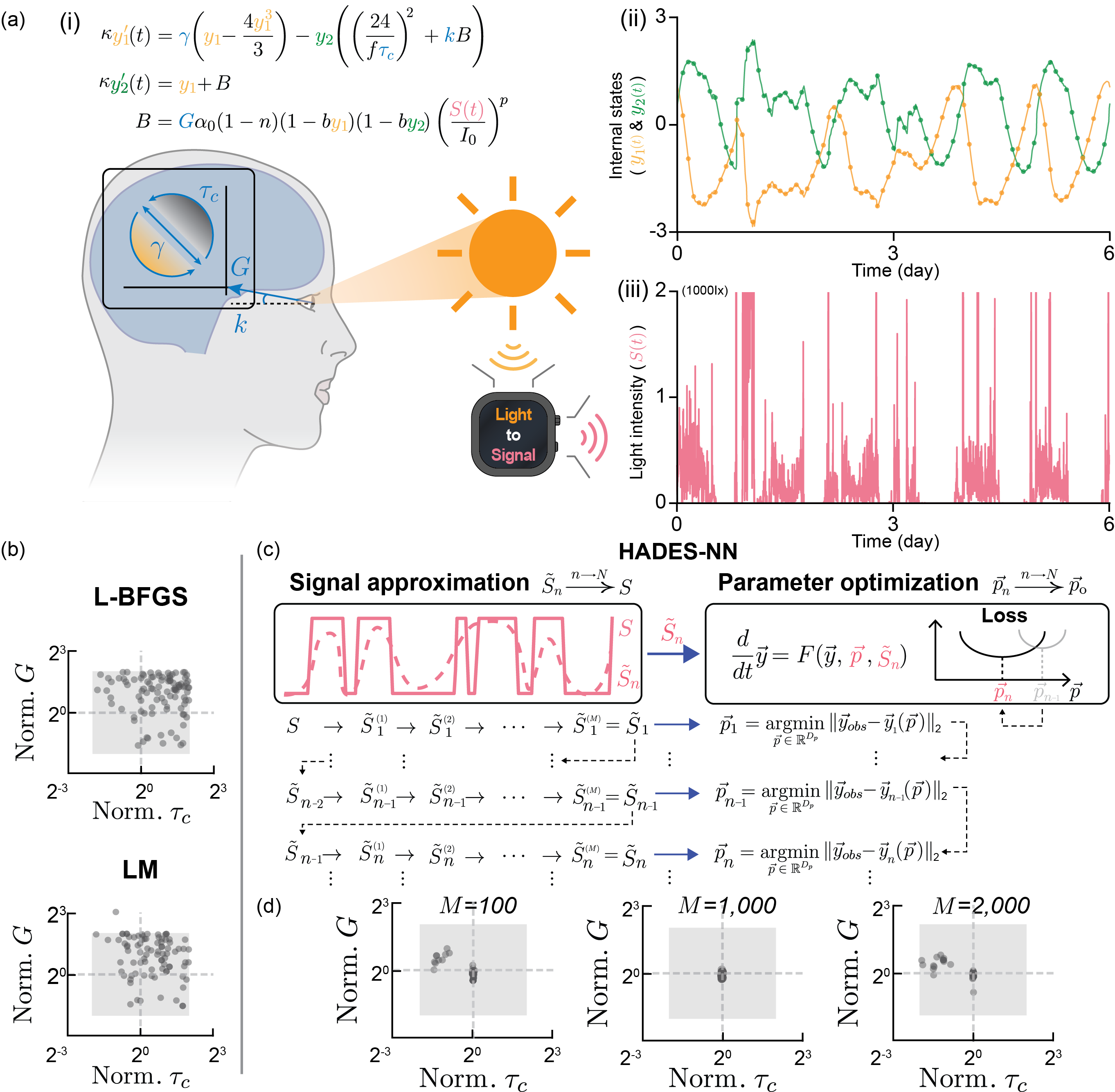}}
    \caption{Failure of widely used local optimization under discontinuous inputs and hyperparameter sensitivity of HADES-NN.
(a) Human circadian pacemaker model in which discontinuous light exposure $S(t)$ is transformed into an effective photic drive $B(t)$ and subsequently modulates the internal oscillator states $\vec y(t)=\left(y_1(t),y_2(t)\right)$ (i). Synthetic observations are generated (ii) by integrating the system for 144 hours using real-world light measurements $S(t)$ (iii).
(b) Scatter plots of parameter estimates obtained by widely used optimization methods, including L-BFGS and LM, from randomized initializations. The gray box indicates the range of parameters from which initial values were randomly sampled. All estimates are normalized by the true parameter values, which are located at the intersection of the dashed lines. Each point represents the final estimate from 30 independent optimization trials, illustrating large dispersion and unstable convergence under irregular exogenous inputs.
(c) Schematic of HADES-NN, which alternates between neural smoothing of the discontinuous input and parameter optimization. At each outer iteration, a smoothed input $\tilde S_n$ is refined through $M$ inner updates and used to estimate system parameters.
(d) Sensitivity of HADES-NN to the smoothing depth $M$. While increasing $M$ enhances regularization, excessive smoothing suppresses informative input variations and induces estimation bias, whereas small $M$ provides insufficient stabilization.}
    \label{fig:1}
  \end{center}
  \vskip -0.3in
\end{figure*}

In this paper, we introduce Deep Predictor--Corrector Networks (DePCoN), a multi-scale learning framework that stabilizes optimization under non-smooth objectives by learning scale-consistent update rules in parameter space. Rather than treating smoothing as a fixed preprocessing choice, DePCoN embeds smoothing into the learning dynamics itself by jointly leveraging a hierarchy of smoothed inputs.
A learned predictor propagates parameter estimates across smoothing scales, while a corrector enforces data consistency at each scale through Neural ODE–based system identification. By aggregating estimation errors across scales, DePCoN transforms smoothing from a fragile hyperparameter into a structured learning input. We further provide a convergence analysis showing that, as the smoothing scale vanishes, the resulting sequence of optimal parameter estimates converges to the global minimizer under standard stability assumptions (Appendix \ref{convergence}).

Through experiments on biological and ecological systems with discontinuous inputs, we show that DePCoN consistently achieves stable and accurate parameter estimation in regimes where widely used optimization and existing neural smoothing approaches fail. Beyond non-autonomous dynamical systems, our framework provides a general learning principle for stabilizing optimization under non-smooth objectives, with potential applications to a broad class of machine learning problems involving discontinuous conditioning or highly non-smooth loss landscapes.

\section{Related Works}
A common strategy to mitigate optimization difficulties induced by non-smooth objectives is to replace the original problem with a smooth surrogate that is easier to optimize \cite{beck2012smoothing}.
One approach is kernel-based smoothing, in which the input signal is regularized via convolution with a smoothing kernel.
This construction follows a well-established principle in mathematical analysis, where convolution with an approximate identity yields smooth approximations that converge to the original function as the smoothing scale decreases (see, e.g., \cite{friedrichs1944identity,hwang2019diffusive} and Chapter~5 of \cite{evans2022partial}).
Closely related ideas have also appeared in modern deep learning as continuation or homotopy-based optimization heuristics, where objective smoothing is used to reduce the ruggedness the loss landscape and stabilize training \cite{lin2023continuation,xu2025global}.
In practice, however, the choice of the smoothing scale is critical: insufficient smoothing may fail to eliminate non-smoothness and lead to numerical instability, whereas excessive smoothing can distort the original dynamics and introduce systematic estimation bias.

More recently, neural-network-based smoothing approaches have been proposed, in which discontinuous exogenous inputs are approximated using neural networks with smooth activation functions, thereby inducing a smooth surrogate objective for parameter estimation.
A representative example is Harmonic Approximation of Discontinuous External Signals using Neural Networks (HADES-NN)~\cite{doi:10.1137/25M1741340}, which alternates between neural approximation of the input signal and parameter optimization on the resulting surrogate dynamics.
While universal approximation results guarantee that such networks can approximate the target input in principle, the effective smoothness of the learned representation depends strongly on architectural and optimization hyperparameters, such as network depth, width, activation functions, and learning rates.
As a consequence, neural-network-based smoothing such as HADES-NN can exhibit hyperparameter-dependent bias, producing surrogate dynamics that deviate from the original system even when the optimization procedure converges.

A common limitation shared by both kernel-based and neural-network-based smoothing approaches is that they rely on a single, implicitly chosen smoothing scale or effective smoothness level, making the resulting optimization sensitive to hyperparameter choices.
Moreover, smoothing is typically treated as an auxiliary or preprocessing step, rather than being integrated into the learning dynamics of the parameter estimation process itself.

\section{Main Results}
\subsection{Failure of Widely Used Optimizers under Discontinuous Forcing}
We begin by revisiting a representative failure mode of parameter estimation in non-autonomous systems driven by discontinuous real-world inputs, as formulated in ~\cref{de}, using the human circadian pacemaker model as an illustrative testbed (\cref{fig:1}-(a) i)).
This model was already employed in previous work~\cite{doi:10.1137/25M1741340} to demonstrate that widely used optimization methods, including deep-learning-based approaches, can yield unstable and highly dispersed parameter estimates under irregular or abruptly switching exogenous signals.

The circadian pacemaker model is a mechanistic non-autonomous oscillator in which the wearable-derived light signal $S(t)$ is transformed into an effective photic drive $B(t)$ and subsequently coupled to the internal states $\vec{y}(t)=(y_1(t),y_2(t))$.
Inter-individual variability in circadian responses is summarized by the set of four parameters $\vec{p}=(p_1,p_2,p_3,p_4)=(\tau_c,\gamma,G,k)$, representing the intrinsic period, oscillator stiffness, photic gain, and coupling strength, respectively (see \cite{doi:10.1137/25M1741340} for details).
We numerically integrate the model over 144 hours (6 days) using the measured light exposure $S(t)$ as input and construct synthetic observation data by uniformly sampling 80 time points,
$\{\vec{y}_{\mathrm{o}}(t_i)\}_{i=1}^{80}$ (\cref{fig:1}-(a) ii)-iii)).
\begin{figure*}[t!]
  \begin{center}
    \centerline{\includegraphics[width=0.85\linewidth]{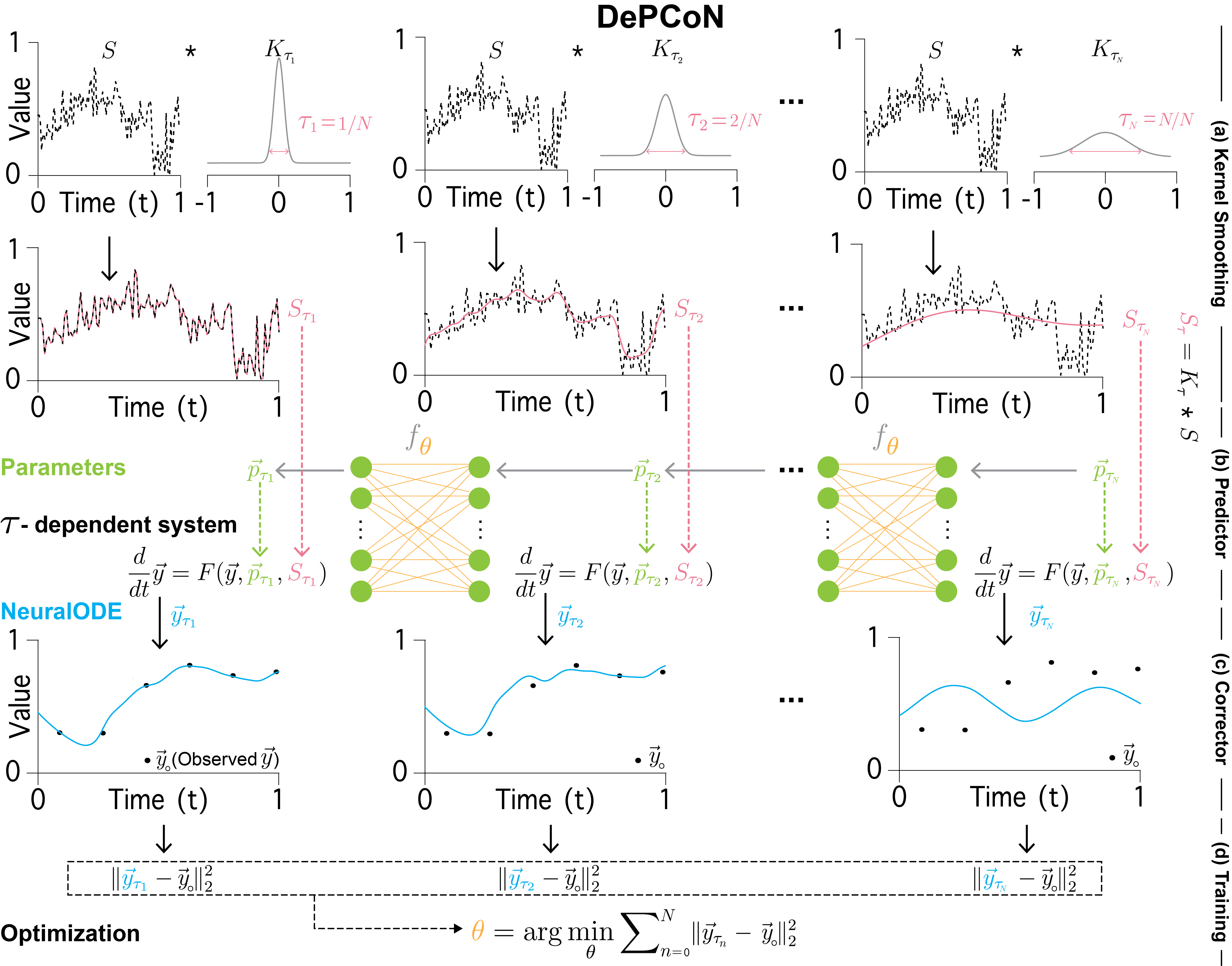}}
    \caption{DePCoN stabilizes parameter estimation by learning parameter update rules that remain consistent across multiple input-smoothing scales.
(a) Multi-scale preprocessing of the discontinuous exogenous input $S(t)$ via heat-kernel convolution, producing a hierarchy of smoothed inputs $\{S_{\tau_n}(t)\}_{n=0}^{N}$ with normalized scales $\tau_n=n/N$, from lightly smoothed ($\tau_1$) to heavily smoothed ($\tau_N$).
(b) Predictor stage. Starting from the coarsest scale $\tau_N$, the predictor network $f_\theta$ propagates parameter estimates toward finer scales according to $\vec p_{\tau_{n-1}} = f_\theta(\vec p_{\tau_n})$, transferring stable information to increasingly discontinuous regimes.
(c) Corrector stage. For each scale $\tau_n$, the predicted parameters $\vec p_{\tau_n}$ and smoothed input $S_{\tau_n}(t)$ define a $\tau$-dependent non-autonomous system whose Neural ODE solution $\vec y_{\tau_n}$ is compared with observations.
(d) Training objective. The predictor network is trained end-to-end by minimizing a multi-scale loss $\sum_{n=0}^{N}\|\vec y_{\tau_n}-\vec y_{\mathrm{o}}\|$, enforcing consistency across scales and reducing sensitivity to any single smoothing choice.
}
    \label{fig:2}
  \end{center}
\vskip -0.3in
\end{figure*}
Given these observations, we estimate $\vec{p}$ that minimize a loss function~\cref{L}.
To assess robustness with respect to initialization, parameter estimation is repeated with random initial guesses sampled independently from $[p_{\mathrm{o},i}/4,\,4p_{\mathrm{o},i}]$ for $i=1,\ldots,4$ (gray boxes in \cref{fig:1}-(b)).
 Both Limited-memory Broyden–Fletcher–Goldfarb–Shanno (L-BFGS) and Levenberg--Marquardt (LM), despite being standard choices in practice, produce widely dispersed estimates across runs rather than consistently converging to the ground-truth parameters, indicated by the horizontal dashed line at the reference level $2^0 = 1$ (\cref{fig:1}-(b),\cref{fig:5}-(a)). This lack of convergence is not limited to gradient-based methods. As shown in \cref{fig:5}-(a), several widely used derivative-free or constrained optimization algorithms—including Differential Evolution (DE), Nelder–Mead (NM), and Sequential Least Squares Quadratic Programming (SLSQP)—also fail to recover meaningful parameter estimates under the same discontinuous-input setting. In particular, DE frequently diverges numerically, while NM and SLSQP remain trapped near their initializations, yielding highly dispersed or systematically biased estimates.
 
These results are consistent with the failure mode reported in \cite{doi:10.1137/25M1741340} and reflects a fundamental difficulty: discontinuous and irregular inputs induce highly non-smooth loss landscapes that degrade the performance of widely used optimization methods.

\subsection{Hyperparameter Sensitivity of HADES-NN}\label{hyper}
\noindent
Recently, HADES-NN was proposed to mitigate this issue by learning smooth approximations of discontinuous inputs $S(t)$ and performing parameter optimization using the resulting dynamics \cite{doi:10.1137/25M1741340}.
HADES-NN alternates between signal smoothing and parameter optimization over an outer loop indexed by $n=1,\ldots,N$ (\cref{fig:1}-(c)). At the $n$-th outer iteration, a neural-network-based smoother refines a smoothed approximation of the discontinuous input through $M$ inner updates $(m=1,\ldots,M)$, yielding a progressively refined signal $\tilde S_n^{(M)}$, denoted by $\tilde{S}_n$.
Using this refined input $\tilde{S}_n$, HADES-NN integrates the corresponding dynamical system~\cref{de} by replacing $S(t)$ with $\tilde{S}_n$ and updates the parameter estimate by fitting simulated trajectories to the observations, yielding an optimized parameter vector $\vec{p}_n$.
Crucially, successive outer iterations are coupled through a continuation mechanism: the refined input approximation $\tilde{S}_n$ is used to initialize the next smoothing stage, while the estimated parameters $\vec{p}_n$ serve as the initial condition for the subsequent parameter optimization.

To assess the sensitivity of HADES-NN to the smoothing depth, we evaluate parameter estimates obtained with different values of the inner-loop iteration number ($M=100,\,1000,$ and $2000$, while fixing $N=200$) (\cref{fig:1}-(d)).
Although HADES-NN improves estimation accuracy over widely used optimization baselines (\cref{fig:1}-(b)), the results reveal a pronounced dependence on $M$.
When $M$ is small, insufficient regularization leads to increased dispersion of parameter estimates, whereas large $M$ can over-smooth informative input variations and introduce bias.

In addition to accuracy sensitivity, increasing $M$ incurs a non-negligible computational cost.
Because each smoothing update requires repeated neural approximation and numerical integration, larger values of $M$ substantially increase runtime.
Together, these results indicate that the optimal choice of $M$ is not obvious \emph{a priori} and requires careful tuning to balance accuracy, robustness, and computational efficiency.

\subsection{DePCoN: A Predictor--Corrector Framework for Parameter Estimation under Discontinuous Inputs}

To overcome the hyperparameter sensitivity and computational burden observed in existing approaches, we propose \emph{Deep Predictor--Corrector Networks (DePCoN)}, a parameter-estimation framework that integrates information across multiple smoothing scales in a principled manner.
Rather than relying on a single smoothed input, DePCoN exploits a hierarchy of smoothed signals and learns scale-consistent parameter updates, leading to stable estimation under discontinuous exogenous inputs.

As illustrated in \cref{fig:2}, DePCoN consists of a preprocessing step followed by two coupled stages: a predictor stage and a corrector stage.
In the preprocessing step (\cref{fig:2}-(a)), we apply heat-kernel convolution to the discontinuous input signal $S(t)$ to generate a family of smoothed signals $\{S_{\tau_n}(t)\}_{n=0}^{N}$~\cref{smoothing_input}.
The smoothing scale is normalized and discretized as $\tau_n = n/N$ for $n=0,1,\ldots,N$, producing a hierarchy of inputs ranging from unsmoothed ($\tau_0=0$), through lightly smoothed ($\tau_1=1/N$), to heavily smoothed ($\tau_N=1$) (See \cref{subsec:smoothing} for the mathematical interpretation of the case $\tau_0=0$).

The predictor stage (\cref{fig:2}-(b)) learns to propagate parameter estimates across smoothing scales.
Starting from the coarsest scale $\tau_N$, the predictor network $f_\theta$ sequentially generates parameter estimates along the scale hierarchy according to $\vec p_{\tau_{n-1}} = f_\theta(\vec p_{\tau_n})$ for $n=1,\ldots,N$ (see architectural details in \cref{subsec:pred_net}).
Through this process, the predictor transfers stable parameter information obtained under heavy smoothing to finer scales where the effects of discontinuities are more pronounced.

\begin{figure}[t!]
  \vskip 0.2in
  \begin{center}
    \centerline{\includegraphics[width=\linewidth]{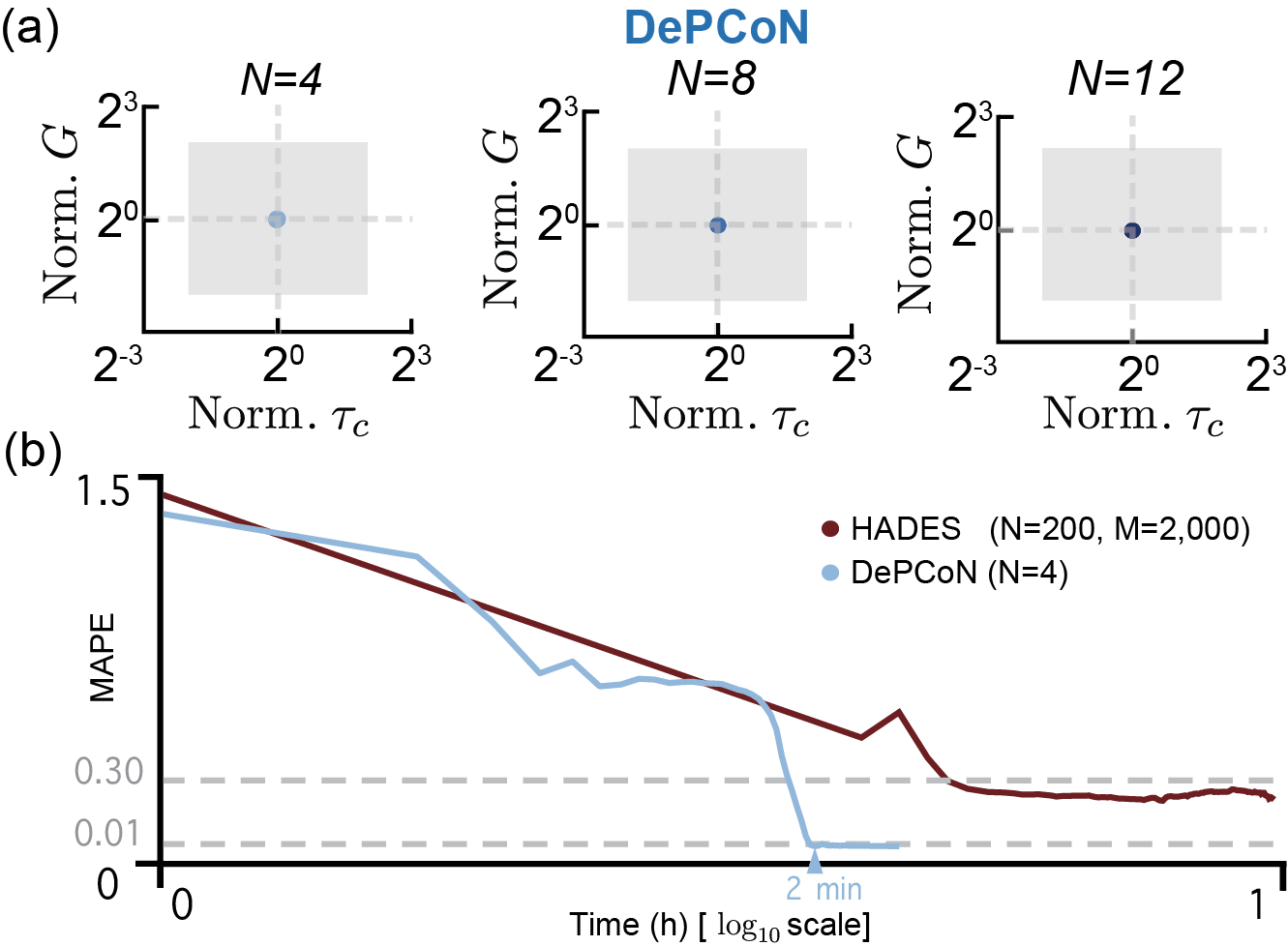}}
    \caption{Hyperparameter robustness and computational efficiency of DePCoN.
(a) Parameter estimation results of DePCoN under different smoothing-grid sizes $N=\{4,8,12\}$.
Each panel shows the distribution of estimated parameters $(\tau_c, G)$ obtained from randomized initializations.
In contrast to the pronounced hyperparameter sensitivity observed for HADES-NN in Fig.~1(d), the estimates produced by DePCoN remain tightly concentrated across all values of $N$, indicating substantially reduced sensitivity to the smoothing-grid hyperparameter. Gray boxes indicate the parameter ranges used for random initialization, and all estimates are normalized by the true parameter values, located at the intersection of the dashed lines.
(b) MAPE trajectories over a 1-hour interval comparing DePCoN with HADES-NN.
DePCoN rapidly enters a low-error regime and exhibits stable convergence,
whereas HADES-NN converges more slowly and remains at a higher error level.}
    \label{fig:3}
  \end{center}
\vskip -0.3in
\end{figure}

In the corrector stage (\cref{fig:2}-(c)), the predicted parameters $\{\vec p_{\tau_n}\}_{n=0}^{N}$ are evaluated against the observations across all smoothing scales.
For each scale $\tau_n$, the smoothed input $S_{\tau_n}(t)$ together with the corresponding parameter estimate $\vec{p}_{\tau_n}$ defines a $\tau$-dependent non-autonomous dynamical system~\cref{de_tau}, whose Neural ODE solution is denoted by $\vec{y}_{\tau_n}$.
The discrepancy between $\vec y_{\tau_n}$ and the observed data $\vec y_{\mathrm{o}}$ provides a scale-specific estimation error (See \cref{subsec:corr_net} for the detailed formulation).

\begin{figure*}[t!]
  \begin{center}
    \centerline{\includegraphics[width=0.95\linewidth]{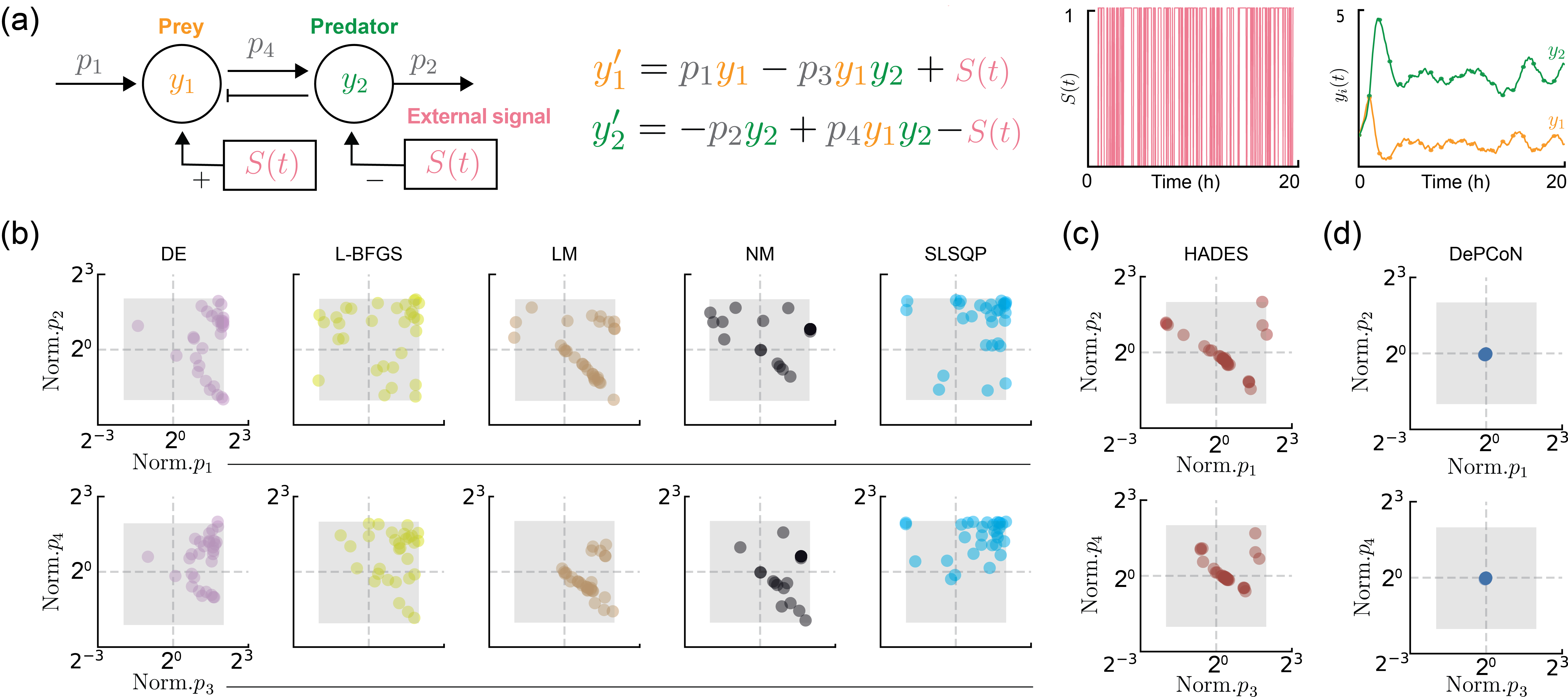}}
    \caption{
Robust parameter estimation on the Lotka--Volterra system with discontinuous exogenous inputs.
(a) Modified Lotka--Volterra model describing prey--predator dynamics driven by an abrupt external signal $S(t)$.The resulting trajectories exhibit pronounced non-smooth behavior under real-world--like environmental perturbations.
(b) Parameter estimates obtained using widely used optimization methods under identical experimental settings. Each scatter plot shows estimates of parameter pairs from 30 independent trials with randomized initializations, revealing broad dispersion and strong sensitivity to initialization.
(c) Results for HADES-NN using hyperparameter configurations reported to perform well in prior studies~\cite{doi:10.1137/25M1741340}. Despite neural smoothing of the discontinuous input, the resulting estimates remain unstable and exhibit systematic bias away from the ground-truth parameters.
(d) DePCoN yields tightly clustered parameter estimates centered at the ground truth, demonstrating both high accuracy and strong consistency.  
All parameter estimates are normalized by the true parameter values, which are located at the intersection of the dashed lines, and gray boxes indicate the parameter ranges used for random initialization.}
    \label{fig:4}
  \end{center}
\vskip -0.3in
\end{figure*}

Crucially, the predictor network $f_\theta$ is trained end-to-end by minimizing the aggregated estimation error across all scales, i.e., by minimizing $\sum_{n=0}^{N} \|\vec y_{\tau_n} - \vec y_{\mathrm{o}}\|$~\cref{L_tau}-\cref{L_total}.
This multi-scale loss enforces consistency of parameter estimates across smoothing scales and couples the predictor and corrector stages in a unified learning framework. We also include mathematical analysis, under well-posed system assumption, the limit of $\vec{p}_{\tau}$ converges to the optimal parameter in Appendix~\ref{convergence}

\subsection{Hyperparameter Robustness of DePCoN}
As shown in \cref{fig:1}-(d), HADES-NN exhibits pronounced sensitivity to the smoothing hyperparameter. Motivated by this limitation, we investigate how the performance of the proposed DePCoN framework depends on its smoothing hyperparameter choices. To this end, we examine the parameter estimation behavior of DePCoN across different values of the smoothing-grid size $N=4, 8$, and $12$ (\cref{fig:3}-(a)). Unlike HADES-NN, whose estimation accuracy varies significantly with the inner-loop iteration number $M$, the estimates produced by DePCoN remain consistently concentrated as $N$ varies, indicating substantially reduced hyperparameter sensitivity. See also \cref{fig:6}-(a), which summarizes DePCoN’s sensitivity to the choice of $N$ using box plots of the mean absolute percentage error (MAPE; \cref{subsec:metric}) over the range $N=4,\ldots,12$.

We next evaluate the computational efficiency of HADES-NN and DePCoN under different number of inner-loop iteration (HADES-NN) and smoothing scales (DePCoN). To this end, we plot MAPE profiles over a 1-hour time interval on the circadian pacemaker model (\cref{fig:3}-(b)), showing that DePCoN rapidly enters a low-error regime, reaching a MAPE of approximately $0.01$ within the first $0.03$h, and maintains stable convergence thereafter.
In contrast, HADES-NN converges more slowly and saturates at substantially higher error levels (MAPE=0.3), requiring longer optimization to reach comparable accuracy.  Together, these results demonstrate that DePCoN not only reduces sensitivity in the final parameter estimates but also accelerates convergence while stabilizing the overall optimization trajectory under hyperparameter variation.

Table~\ref{tab:my-table} summarizes the parameter estimation results for the circadian pacemaker model.
For each method, we report the mean $\pm$ standard deviation of the estimated parameter values across independent trials, together with the overall MAPE.
Consistent with the dispersion patterns observed in Figures~1 and~5, DePCoN yields estimates that are both accurate and stable, whereas the other methods exhibit large variability or systematic deviation from the ground truth.

\subsection{Robust and Consistent Parameter Estimation on a Lotka--Volterra Benchmark}
We next evaluate DePCoN on a non-autonomous Lotka--Volterra (LV) system, describing the interaction between prey ($y_1$) and predator ($y_2$) under a time-varying environment driven by $S(t)$, such as climate change, environmental pollution, and human intervention (\cref{fig:4}-(a)). Such environment changes can vary abruptly, and also yields non-smooth trajectories $\vec{y}=(y_1,y_2)$ (\cref{fig:4}-(a) right). This LV system serves as a benchmark example in which discontinuous external inputs are known to severely degrade the performance of gradient-based parameter estimation.
Although the LV model itself is relatively simple, it has been widely used to demonstrate the instability of optimization under discontinuous forcing, including in the benchmark study of ~\cite{doi:10.1137/25M1741340}.
This makes it a suitable testbed for assessing robustness rather than expressive power.

While the original benchmark considered $S(t)$ applied only to $y_1$, introducing opposing effects on both states further exacerbates non-smoothness in the loss landscape.
Under this modified setting, We apply a range of widely used optimization methods (DE, L-BFGS, LM, NM, SLSQP, and HADES-NN). Notably, HADES-NN is evaluated using hyperparameters reported to perform well in prior studies. Even under this favorable condition, none of the methods reliably recover the ground-truth parameters \cref{fig:4}-(b--d).
We then compare DePCoN with these six algorithms under identical experimental conditions. Each method is evaluated over 30 independent trials with parameters initialized uniformly from $[p_{\mathrm{o},i}/4,\,4p_{\mathrm{o},i}]$ for $i=1,\ldots,4$.

The results show that DePCoN is the only method that consistently yields estimates tightly clustered around the ground-truth parameters, as indicated by the concentration near the dashed-line intersection (\cref{fig:4}-(d)).
This behavior reflects both high accuracy and strong consistency across random initializations.
In contrast, competing methods exhibit either broad dispersion or systematic bias away from the ground truth, highlighting their sensitivity to both initial conditions and discontinuous inputs (\cref{fig:4}-(b--c)). 
In addition to the scatter plots in \cref{fig:4}-(d), \cref{tab:my-table} (Lotka Volterra) reports per-parameter estimation results (mean $\pm$ standard deviation) and overall MAPE, showing that DePCoN consistently outperforms all baselines.

We further examine the sensitivity of DePCoN to the choice of the smoothing-grid size $N$ using box plots of MAPE (See \cref{fig:6} (b) for details).
Across a wide range of values ($N=4,\ldots,12$), the MAPE distributions remain remarkably stable, with no significant performance degradation as $N$ varies.
Although increasing $N$ yields a denser set of smoothing scales and slightly higher computational cost, estimation accuracy remains largely invariant.
These results demonstrate that DePCoN achieves stable and reliable parameter estimation on a benchmark system where discontinuous inputs render conventional and smoothing-based methods highly sensitive to hyperparameter choices.

\section{Methodology}
\subsection{Description of the general non-autonomous differential equations with time-varying exogenous input $S(t)$} A system under the exogenous input, $S:[0,T]\rightarrow\mathbb{R}$, can generally be described by a non-autonomous differential equation with  $F:\mathbb{R}^{d_y}\times\mathbb{R}^{d_p}\times\mathbb{R}\rightarrow\mathbb{R}^{d_y}$:
\begin{equation}\label{de}
    \frac{d}{dt}\vec{y}=F(\vec{y},\vec{p},S)
\end{equation}
where $\vec{y}(t;\vec{p})=\vec{y}(t)\in\mathbb{R}^{d_y}$ denote system state of our interest at time $t$, $\vec{p}\in\mathbb{R}^{d_p}$ is unknown parameter vector that determine the shape of $\vec{y}$. Fitting the solution of system~\cref{de} to real observations requires the optimal choice of $\vec{p}$ so that the resulting trajectory matches the observation data $\{\vec{y}_{\mathrm{o}}(t_i):i=1,\ldots,N_{\mathrm{o}}\}$ on a finite time points $0=t_1<\ldots<t_{N_{\mathrm{o}}} =T$. This task is commonly formulated as a parameter estimation (inverse) problem: find an optimal $\vec{p}$ that minimizes the following loss function measuring the discrepancy between the solution $\vec{y}$ of~\cref{de} and $\vec{y}_{\mathrm{o}}$ at observation times $\{t_{i}\}_{i=1}^{N_{\mathrm{o}}}$:
\begin{align}
L(\vec{p}) & =\frac{1}{N_{\mathrm{o}}}\sum_{i=1}^{N_{\mathrm{o}}}\|\vec{y}(t_i;\vec{p})-\vec{y}_{\mathrm{o}}(t_i)\|_{2}^{2},\label{L}\\
& = \frac{1}{N_{\mathrm{o}}}\sum_{i=1}^{N_{\mathrm{o}}}\sum_{j=1}^{d_y}\left|y_j(t_i;\vec{p})-y_{o,j}(t_i)\right|^2.\nonumber
\end{align}
Under this setting, we denote $\vec{p}_{\mathrm{o}}$ by the minimizer of~\cref{L}. As noted in the introduction, this optimization problem is challenging in practice because the discontinuities in $S$ can make the map $\vec{p}\mapsto L(\vec{p})$ highly irregular. 

\begin{table*}[t]
\caption{Quantitative comparison of parameter estimation results.
Mean $\pm$ standard deviation of estimated parameter values and overall MAPE
across independent trials for DePCoN, HADES-NN~\cite{doi:10.1137/25M1741340},
and widely used optimization methods
(DE~\cite{price2006differential},
L-BFGS~\cite{zhu1997algorithm},
LM~\cite{more2006levenberg},
NM~\cite{gao2012implementing},
SLSQP~\cite{kraft1988software}).
}
\label{tab:my-table}
\centering
\begin{threeparttable}
\small
\setlength{\tabcolsep}{4pt}
\renewcommand{\arraystretch}{1.15}

\begin{tabular}{lccccccccc}
\toprule
Systems & Param. & True
& DePCoN (ours) & HADES-NN & DE & L-BFGS & LM & NM & SLSQP \\
\midrule
\multirow{5}{*}{\begin{tabular}[c]{@{}l@{}}Circadian\\pacemaker\end{tabular}}
& $\tau_c$ & 24   & \textbf{23.65$\pm$0.14} & 23.10$\pm$2.53 & $-\pm-$ & 51.08$\pm$27.43 & 51.08$\pm$27.43 & 51.08$\pm$27.43 & 52.18$\pm$31.42 \\
& $G$      & 20   & \textbf{19.85$\pm$0.30} & 19.59$\pm$1.21 & $-\pm-$ & 46.34$\pm$20.58 & 46.34$\pm$20.58 & 46.34$\pm$20.58 & 49.34$\pm$24.07 \\
& $\gamma$ & 0.23 & \textbf{0.23$\pm$0.00} & 0.28$\pm$0.08 & $-\pm-$ & 0.52$\pm$0.26 & 0.52$\pm$0.26 & 0.52$\pm$0.26 & 0.50$\pm$0.30 \\
& $k$      & 0.55 & \textbf{0.53$\pm$0.01} & 0.72$\pm$0.31 & $-\pm-$ & 1.09$\pm$0.64 & 1.09$\pm$0.64 & 1.09$\pm$0.64 & 1.06$\pm$0.82 \\
& \multicolumn{2}{c}{MAPE} & \textbf{0.02$\pm$0.01} & 0.16$\pm$0.24 & $-\pm-$ & 1.34$\pm$0.48 & 1.34$\pm$0.48 & 1.34$\pm$0.48 & 1.46$\pm$0.43 \\
\midrule
\multirow{5}{*}{\begin{tabular}[c]{@{}l@{}}Lotka\\Volterra\end{tabular}}
& $p_1$ & 2   & \textbf{1.97$\pm$0.03} & 3.00$\pm$1.92 & 5.97$\pm$2.00 & 4.02$\pm$2.70 & 4.17$\pm$2.16 & 5.51$\pm$3.06 & 5.04$\pm$2.50 \\
& $p_2$ & 0.5 & \textbf{0.49$\pm$0.01} & 0.56$\pm$0.38 & 0.83$\pm$0.52 & 1.01$\pm$0.61 & 0.60$\pm$0.42 & 0.85$\pm$0.33 & 1.27$\pm$0.61 \\
& $p_3$ & 1   & \textbf{0.99$\pm$0.01} & 1.47$\pm$0.68 & 2.45$\pm$0.74 & 2.27$\pm$1.02 & 2.03$\pm$0.74 & 2.55$\pm$0.83 & 2.25$\pm$1.14 \\
& $p_4$ & 1   & \textbf{0.99$\pm$0.01} & 1.19$\pm$0.57 & 1.71$\pm$0.95 & 1.93$\pm$1.05 & 0.91$\pm$0.49 & 1.26$\pm$0.58 & 2.81$\pm$1.07 \\
& \multicolumn{2}{c}{MAPE} & \textbf{0.02$\pm$0.01} & 0.53$\pm$0.56 & 1.37$\pm$0.62 & 1.29$\pm$0.58 & 0.84$\pm$0.51 & 1.22$\pm$0.53 & 1.65$\pm$0.62 \\
\bottomrule
\end{tabular}
\end{threeparttable}
\end{table*}

\subsection{Generation of a family of smoothed input via Heat-kernel convolution}\label{subsec:smoothing} We propose the Heat-kernel-based smoothing method. Specifically, let $K_{\tau}(t)=\frac{1}{\sqrt{4\pi\tau}}\exp\!\Big(-\frac{t^2}{4\tau}\Big)$ denote the heat kernel with a diffusion time $\tau>0$, (i.e., a smoothing scale: larger $\tau$ yields stronger smoothing) \cite{evans2022partial}. To define the convolution on $\mathbb{R}$, we extend the input $S:[0,T]\to\mathbb{R}$ by zero outside $[0,T]$. We then define $S_{\tau}$ as the convolution of $S$ with $K_{\tau}$:
\begin{equation}\label{smoothing_input}
S_{\tau}(t)
= (K_{\tau} * S)(t)
= \int_{-\infty}^{\infty} K_{\tau}(t-s)\,S(s)\,ds,\text{ } \forall\tau>0.
\end{equation}
By construction, the heat-kernel convolution yields a smooth input $S_{\tau}\in C^{\infty}(\mathbb{R})$. Moreover, as $\tau\to0^+$, $S_{\tau}\to S$ in the appropriate function-space sense (see Appendix~\ref{convergence} and \cite{stein2009real}). Finally, by selecting $N+1$ discrete values of $\tau$ on $[0,1]$,
$\{\tau_{n}=n/N\}_{n=0}^{N}$, we obtain a family of smoothed inputs
$\{S_{\tau_n}\}_{n=0}^{N}$ with $S_{\tau_0}=S$ (\cref{fig:2}-(a)).

\subsection{Development of the predictor network}\label{subsec:pred_net}
We introduce a predictor network $f_{\theta}$ with trainable parameters $\theta$ (\cref{fig:2}-(b)), implemented as a fully connected neural network. Given the parameter estimate $\vec{p}_{\tau_n}$, the predictor produces the next estimate at the smoothing scale $\tau_{n-1}$, $\vec{p}_{\tau_{n-1}} = f_{\theta}(\vec{p}_{\tau_n})$, $\forall n=1,\ldots,N$. To this end, at the coarsest scale $\tau_N$, we initialize the parameter $\vec{p}_{\tau_N}$ by sampling it from the admissible set $P_{\mathrm{o}}$ (see Assumption~\ref{assumptionB1} for details). Starting from the coarsest level $\tau_N$, we feed $\vec{p}_{\tau_N}$ into $f_{\theta}$. The first layer maps $\vec{p}_{\tau_N}$ to a 64 nodes via a linear transformation followed by a ReLU activation. This operation is then repeated through two additional hidden layers with 32 and 16 units, respectively, each followed by a ReLU activation, yielding a final 16-dimensional feature vector. The output layer is a single fully connected linear layer (no activation) that maps the final 16 features to the next parameter estimate $\vec{p}_{\tau_{N-1}}$ at smoothing scale $\tau_{N-1}$. Repeating this procedure recursively generates $\{\vec{p}_{\tau_n}\}_{n=0}^{N-1}$ (\cref{fig:2}-(b) Parameters).

\subsection{Development of the corrector network}\label{subsec:corr_net}
Although the predictor yields a sequence of parameter estimates $\{\vec{p}_{\tau_n}\}_{n=0}^{N}$, these predictions are not, in general, optimal for any well-defined objective at each smoothing level yet. We therefore introduce a correction step. For each smoothing scale $\tau_n$, given the predicted parameter $\vec{p}_{\tau_n}$ from the previous step, we first compute the state trajectory $\vec{y}_{\tau_n}$ by solving the $\tau$-dependent non-autonomous system (\cref{fig:2}-(b)):
\begin{equation}\label{de_tau}
        \frac{d}{dt}\vec{y}=F(\vec{y},\vec{p},S_{\tau_n}).
\end{equation}
In practice, the solution operator for~\cref{de_tau} is implemented using a Neural ODE solver~\cite{chen2018neural}, which enables end-to-end differentiable evaluation of $\vec{y}_{\tau_n}$ and supports gradient-based updates of both $\vec{p}_{\tau_n}$ and the predictor parameters $\theta$ via adjoint-based backpropagation (\cref{fig:2}-(c), NeuralODE).

We then quantify, for each smoothing scale $\tau$, the discrepancy between $\vec{y}_{\tau}$ and $\vec{y}_{\mathrm{o}}$ by evaluating the following $\tau$-dependent loss at observation times:
\begin{equation}\label{L_tau}
L_{\tau}(\vec{p})=\frac{1}{N_{\mathrm{o}}}\sum_{i=1}^{N_{\mathrm{o}}}\|\vec{y}_{\tau}(t_i;\vec{p})-\vec{y}_{\mathrm{o}}(t_i)\|_2^2,
\end{equation}
for $\tau=\tau_0,\ldots,\tau_N$ (\cref{fig:2}-(c) Optimization). We then aggregate the losses across smoothing levels to obtain the multi-scale objective
\begin{equation}\label{L_total}
\sum_{n=0}^{N} L_{\tau_n}(\vec{p})
=\frac{1}{N_{\mathrm{o}}}\sum_{n=0}^{N}\sum_{i=1}^{N_{\mathrm{o}}}\big\|\vec{y}_{\tau_n}(t_i;\vec{p})-\vec{y}_{\mathrm{o}}(t_i)\big\|_2^2.
\end{equation}
Finally, we update both the predictor parameters $\theta$ and the terminal parameter $\vec{p}_{\tau_N}$ using the Adam optimizer. We repeat this predictor--corrector procedure until the gradient $\nabla_{\theta}\sum_{n=0}^{N} L_{\tau_n}(\vec{p}_{\tau_n})$ becomes sufficiently small.
\section{Conclusion}
Our results demonstrate that DePCoN consistently achieves stable and accurate parameter estimation under discontinuous inputs, outperforming widely used optimization methods, together with neural smoothing approaches in terms of robustness and time-to-accuracy.
Beyond the specific setting of non-autonomous dynamical systems, DePCoN offers a general learning principle for stabilizing optimization under non-smooth objectives.
By integrating heat-kernel convolution into the learning dynamics and enforcing consistency across scales, our framework transforms a traditionally fragile hyperparameter choice into a structured learning input.
This perspective is broadly applicable to machine learning problems involving discontinuous conditioning, hybrid dynamics, or highly non-smooth loss landscapes.

\newpage
\bibliography{example_paper}
\bibliographystyle{icml2026}

\section*{Acknowledgements}
We do not include acknowledgements in the initial version of the paper
submitted for blind review.

\section*{Impact Statement}
This paper presents work whose goal is to advance the field of Machine Learning for scientific applications. There are many potential societal consequences of our work, none of which we feel must be specifically highlighted here.

\newpage
\appendix
\onecolumn
\section{Preliminaries}
\subsection{Notation}
Let $\vec{p}=(p_1,p_2,\ldots,p_{d_p})\in\mathbb{R}^{d_p}$ denote a real-valued vector of dimension $d_p\ge 1$. We measure its magnitude using the $L^2$ (Euclidean) norm:
$$\|\vec{p}\|_2=\left(\sum_{i=1}^{d_p}p_i^2\right)^{1/2}.$$

For a time-dependent vector $\vec{y}(t)=(y_1(t),y_2(t),\ldots,y_{d_y}(t))\in\mathbb{R}^{d_y}$ defined on $t\in[0,T]$, we use the pointwise $L^2$ norm
$$\|\vec{y}(t)\|_2=\left(\sum_{i=1}^{d_y}y_i^2(t)\right)^{1/2}.$$

\subsection{Function spaces}
$\mathcal{L}^2(X)$ denotes the set of real-valued Lebesgue measurable functions defined on $X\subset\mathbb{R}^{d_x}$ $f:X\to\mathbb{R}$ such that $\|f\|_{\mathcal{L}^2(X)}:=\Big(\int_{X} |f(t)|^2\,dt\Big)^{1/2}<\infty.$ 

For vector-valued functions $\vec{y}:X\rightarrow\mathbb{R}^{d_y}$, same notations are used:
$\|\vec{y}\|_{\mathcal{L}^2(X)}:=\Big(\sum_{i=1}^{d_y}\int_{X} y_i^2(t)\,dt\Big)^{1/2}<\infty.$

\subsection{Mean absolute percentage error}\label{subsec:metric}
We evaluate parameter-estimation accuracy using the mean absolute percentage error (MAPE) and its averaged variants.
Let $\vec{p}_{\mathrm{o}}=(p_{\mathrm{o},1},\ldots,p_{\mathrm{o},d_p})\in\mathbb{R}^{d_p}$ denote the ground-truth parameter vector and $\vec{p}_{j}=(p_{j,1},\ldots,p_{j,d_p})$ the estimate obtained from the $j$-th independent trial ($j=1,\dots,K$).
We define the elementwise absolute percentage error (APE) as
\begin{equation}\nonumber
\mathrm{APE}_{j,i}=\frac{|p_{j,i}-p_{\mathrm{o},i}|}{|p_{\mathrm{o},i}|},\qquad i=1,\dots,d_p.
\end{equation}
The MAPE is then defined by averaging over parameters and trials:
\begin{equation}\nonumber
\mathrm{MAPE}=\frac{1}{K}\sum_{j=1}^{K}\frac{1}{d_p}\sum_{i=1}^{d_p}\mathrm{APE}_{j,i}.
\end{equation}

\section{Convergence of DePCoN estimates $\vec{p}_{\tau}\to\vec{p}_{\mathrm{o}}$ as $\tau\to 0^+$}\label{convergence}

\*paragraph{Purpose of the analysis.}
The following convergence analysis shows that, under standard well-posedness and identifiability assumptions, the sequence of optimal parameter estimates obtained at each smoothing scale is well-defined and converges to the optimizer of the unsmoothed problem as the smoothing vanishes. While this result assumes access to the scale-wise optima and therefore does not model the exact training dynamics of DePCoN, it provides the theoretical rationale for smoothing-based continuation: in practice, DePCoN learns to track these scale-wise optima across multiple smoothing levels and, at the finest scale, produces the final estimate by inferring the $\tau=0$ parameter through the learned predictor network.

Let $(\vec{y}_{\mathrm{o}}(t), \vec{p}_{\mathrm{o}}, S(t))$ denote the observation trajectory, the ground-truth (optimal) parameter associated with~\cref{L}, and the given exogenous input, respectively. The underlying true system is
\begin{equation}\label{de_optimal}
    \frac{d}{dt}\vec{y}_{\mathrm{o}}(t)=F(\vec{y}_{\mathrm{o}}(t),\vec{p}_{\mathrm{o}},S(t)).
\end{equation}

Although observations are available only at finitely many time points, we treat them as a continuous-time trajectory for notational simplicity and theoretical clarity; the same argument applies to the discretized setting. 

For each $\tau>0$, let $\vec{y}_{\tau}(t)=\vec{y}_{\tau}(t;\vec{p})$ denote the solution of the $\tau$-dependent system~\cref{de_tau},
$$\frac{d}{dt}\vec{y}_{\tau}(t)=F(\vec{y}_{\tau}(t),\vec{p},S_{\tau}(t)),$$
and let $\vec{y}(t)=\vec{y}(t;\vec{p})$ denote the solution of~\cref{de}. By~\cref{de_optimal}, we have $\vec{y}(t;\vec{p}_{\mathrm{o}})=\vec{y}_{\mathrm{o}}(t)$.

In practice, the parameter estimates $\vec{p}_{\tau}$ are obtained by training DePCoN in the correction stage by minimizing~\cref{L_tau}. In this section, we show that $\vec{p}_{\tau}$ converges to the ground-truth parameter $\vec{p}_{\mathrm{o}}$ in the vanishing-smoothing limit $\tau\to 0^+$. This convergence analysis also motivates the prediction stage of DePCoN, which propagates parameter estimates across smoothing scales. To establish the result, we impose the following assumptions, which ensure stability of the dynamical system.

\begin{assumption}\label{assumptionB1}[Admissible set and nondegenerate stationary branch]
For each $\tau>0$, let $\vec p_\tau$ denote a selected local minimizer of the (continuous-time) $\tau$-dependent loss $L_{\tau}$ in~\cref{L_tau}:
\begin{equation}\label{L_tau_cont}
    L_{\tau}(\vec{p})=\|\vec{y}_{\tau}(\cdot;\vec{p})-\vec{y}_{\mathrm{o}}(\cdot)\|_{\mathcal{L}^2([0,T])}^2.
\end{equation}
We assume that there exists a nonempty compact set $P_{\mathrm{o}}\subset\mathbb{R}^{d_p}$ such that
$$\{\vec p_\tau\}_{\tau>0}\subset P_{\mathrm{o}}.$$
Moreover, there exists  $\tau_*>0$ such that the selected branch $\{\vec p_\tau\}_{0<\tau\leq\tau_*}$ consists of nondegenerate stationary points. That is, for all $0<\tau\leq\tau_*$,
$$\nabla_{\vec p} L_\tau(\vec p_\tau)=\vec 0,\text{ }
\nabla^2_{\vec p} L_\tau(\vec p_\tau)\ \text{is invertible}.$$
\end{assumption}

We adopt Assumption~\ref{assumptionB1} because the parameters $\vec{p}_{\tau}$ typically represent domain-specific quantities (e.g., physical or biological constants) whose plausible ranges are often known a priori. In our examples, we set
$$P_{\mathrm{o}}=\Big[\tfrac{1}{4}p_{\mathrm{o},1},\,4p_{\mathrm{o},1}\Big]\times\cdots\times
\Big[\tfrac{1}{4}p_{\mathrm{o},d_p},\,4p_{\mathrm{o},d_p}\Big].$$
Accordingly, such bounds can be imposed directly in the optimization procedure, preventing $\vec{p}_{\tau}$ from leaving the admissible set $P_{\mathrm{o}}$ and excluding nonphysical regimes. 

The nondegeneracy condition on the stationary branch $\{\vec{p}_{\tau}\}$ is imposed to ensure local identifiability and stability of the minimizers with respect to perturbations in $\tau$. In particular, invertibility of the Hessian $\nabla_{\vec{p}}^{2}L_{\tau}(\vec{p}_{\tau})$ rules out flat directions and guarantees that the stationary points are isolated.

\begin{assumption}\label{assumptionB2}[Well-posedness and regularity of $F$ in~\cref{de}]
There exist constants $C_1,C_2>0$ such that, for all $\vec p\in P_{\mathrm{o}}$, $\vec{y}, \vec{y}_{1},\vec{y}_{2}\in\mathbb{R}^{d_y}$, and $S,S_1,S_2\in\mathbb{R}$,
\begin{align}
\|F(\vec{y}_{1},\vec p,S)-F(\vec{y}_{2},\vec p,S)\|_2
&\le C_1\|\vec{y}_{1}-\vec{y}_{2}\|_2,\nonumber\\
\|F(\vec{y},\vec p,S_1)-F(\vec{y},\vec p,S_2)\|_2
&\le C_2|S_1-S_2|.\nonumber
\end{align}
\end{assumption}

Under Assumption~\ref{assumptionB2}, for each $\vec p\in P_{\mathrm{o}}$ and each initial condition, the initial value problem~\cref{de} admits a unique solution $\vec{y}(\cdot;\vec{p})$ on $[0,T]$. Consequently, the loss $L(\vec{p})$ is well-defined.

In addition, $F$ is continuously differentiable with respect to $(\vec{y},\vec{p})$ on $\mathbb{R}^{d_y}\times P_{\mathrm{o}}\times\mathbb{R}$, and its partial derivatives $\partial_{\vec{y}}F$ and $\partial_{\vec{p}}F$ are locally bounded on this set.

\begin{assumption}\label{assumptionB3}[Optional: identifiability and local stability of the parameter]
Given the input $S\in\mathcal{L}^2([0,T])$ and the observation trajectory $\vec{y}_{\mathrm{o}}(t)$, consider the continuous-time loss $L(\vec{p})$ in~\cref{L}:
\begin{equation}\label{L_cont}
    L(\vec{p})=\|\vec{y}(\cdot;\vec{p})-\vec{y}_{\mathrm{o}}(\cdot)\|_{\mathcal{L}^2([0,T])}^2,
\end{equation}
where $\vec{y}(\cdot;\vec{p})$ denotes the solution of~\cref{de} under parameter $\vec{p}$. In general, the minimizer of~\cref{L_cont} need not be unique, so $\arg\min_{\vec{p}\in P_{\mathrm{o}}} L(\vec{p})$ is naturally viewed as a set of minimizers.
We assume that $L$ admits a unique global minimizer $\vec{p}_{\mathrm{o}}\in P_{\mathrm{o}}$, i.e., $\arg\min_{\vec{p}\in P_{\mathrm{o}}} L(\vec{p})={\vec{p}_{\mathrm{o}}}$, and that $L$ satisfies a quadratic growth condition at $\vec{p}_{\mathrm{o}}$: there exist a constants $c>0$ such that
$$L^{1/2}(\vec{p})\ \ge\ L^{1/2}(\vec{p}_{\mathrm{o}})\ +\ c\,\|\vec{p}-\vec{p}_{\mathrm{o}}\|_2^2,
\qquad \forall\,\vec{p}\in P_{\mathrm{o}}.$$
\end{assumption}
Assumption~\ref{assumptionB3} is optional but conceptually decisive. It encodes identifiability and well-conditioning: the uniqueness of $\vec{p}_{\mathrm{o}}$ rules out structural non-identifiability (multiple parameters producing indistinguishable trajectories), while the quadratic growth condition excludes flat directions near the optimum. As a result, convergence statements for $\vec{p}_{\tau}$ bifurcate depending on whether Assumption~\ref{assumptionB3} is imposed. Without Assumption~\ref{assumptionB3}, one can typically guarantee only that $\{\vec{p}_{\tau}\}_{\tau>0}$ has accumulation points in $P_{\mathrm{o}}$ and that every limit point belongs to the set of global minimizers $\arg\min_{\vec{p}\in P_{\mathrm{o}}}L(\vec{p})$; in contrast, under Assumption~\ref{assumptionB3}, the limit is forced to be the single identifiable parameter $\vec{p}_{\mathrm{o}}$.

Under Assumptions~\ref{assumptionB1}--\ref{assumptionB3}, we show that, as $\tau\to 0^+$, the minimizers $\{\vec{p}_{\tau}\}_{\tau>0}\subset P_{\mathrm{o}}$ of the $\tau$-dependent loss $L_{\tau}$ in~\cref{L_tau} converge to the ground-truth parameter $\vec{p}_{\mathrm{o}}$. The proof is organized into four steps:

\textbf{Step B.1 (Input mollification)}
As $\tau\to0^+$, $\|S_{\tau}-S\|_{\mathcal{L}^2(\mathbb{R})}\to0,$ by Lemma~\ref{lem:mollify_Lp} and Corollary~\ref{cor:heat_moli}. Obviously, this implies $\|S_{\tau}-S\|_{\mathcal{L}^2([0,T])}\to0$.

\textbf{Step B.2 (Stability of trajectories)}
Fix $\vec{p}\in P_{\mathrm{o}}$, and let $\vec{y}_{\tau}(\cdot;\vec{p})$ and $\vec{y}(\cdot;\vec{p})$ be the solutions of~\cref{de_tau} and~\cref{de}, respectively. Under Assumption~\ref{assumptionB2},
$$\|S_{\tau}-S\|_{\mathcal{L}^2([0,T])}\to0
\quad\text{implies}\quad\|\vec{y}_{\tau}(\cdot;\vec{p})-\vec{y}(\cdot;\vec{p})\|_{\mathcal{L}^2([0,T])}\to0,$$
by Step B.1 and Lemma~\ref{lem:lipschitz}.

\textbf{Step B.3 (Stability of the loss)}
If $\tau\to0^+$, then
$$\|L_{\tau}^{1/2}-L^{1/2}\|_{\mathcal{L}^2(P_{\mathrm{o}})}\to0,$$
by Step B.2 and Corollary~\ref{cor:ytoL}, where $L_{\tau}(\vec{p})$ and $L(\vec{p})$ are defined in~\cref{L_tau_cont} and~\cref{L_cont}, respectively.

\textbf{Step B.4 (Convergence of minimizers).}
\emph{(i) Without Assumption~\ref{assumptionB3}.} The stability of the objectives implies convergence in optimal value:
$$\lim_{\tau\to0^+} L(\vec{p}_{\tau})=L(\vec{p}_{\mathrm{o}}),$$
and hence every accumulation point of $\{\vec{p}_{\tau}\}_{\tau>0}$ (which exists up to subsequence by compactness of $P_{\mathrm{o}}$ in Assumption~\ref{assumptionB1}) is a global minimizer of $L$; equivalently, if $\lim_{\tau\to0^+}\vec{p}_{\tau}$ exists, then the limit belongs to minimizers of $L(\vec{p})$.

\emph{(ii) Under Assumption~\ref{assumptionB3}.} The minimizers themselves converge: $\|\vec{p}_{\tau}-\vec{p}_{\mathrm{o}}\|_2\to0$ as $\tau\to0^+.$
Moreover, combining Corollary~\ref{cor:delta_tau} with Theorem~\ref{thm:p_convergence} yields the quantitative bound
$$\|\vec{p}_{\tau}-\vec{p}_{\mathrm{o}}\|_2 = \mathcal{O}(\delta_{\tau}^{1/2})\qquad(\tau\to0^+),$$
where $\delta_{\tau}:=\sup_{\vec{p}\in P_{\mathrm{o}}}\big|L_{\tau}^{1/2}(\vec{p})-L^{1/2}(\vec{p})\big|$.

\*paragraph{Remark}
DePCoN does not explicitly compute $\arg\min L_\tau$ for each $\tau$.
Rather, it minimizes an aggregated multi-scale loss that couples parameter estimates across smoothing levels through a learned predictor.
The consistency result above suggests that, when the multi-scale loss is well-optimized, the learned parameters are encouraged to remain close to the family of minimizers of $L_\tau$, thereby stabilizing convergence toward the $\tau \to 0$ regime.

\begin{lemma}\label{lem:mollify_Lp}
Let $S\in\mathcal{L}^2(\mathbb{R})$ be an input supported on $[0,T]$, extended by $0$ to $\mathbb{R}\setminus[0,T]$.
Assume that $\{K_\tau\}_{\tau>0}\subset C^\infty(\mathbb{R})$ is a family of nonnegative kernels satisfying
\begin{equation}\label{mollify_condition}
    \int_{\mathbb{R}} K_\tau(t)\,dt = 1 \quad \forall\tau>0,
 \text{ and } 
\lim_{\tau\rightarrow0^+}\int_{|t|>\delta} K_\tau(t)\,dt=0 \ \forall\delta>0.
\end{equation}
Then $\|K_\tau * S - S\|_{\mathcal{L}^2(\mathbb{R})}\to 0
\text{ as }\tau\to0^+,$ where, for each $\tau>0$,
$$(K_{\tau} * S)(t)=\int_{-\infty}^{\infty} K_{\tau}(t-s)\,S(s)\,ds,$$ and $K_\tau * S\in C^{\infty}(\mathbb{R})$.
\end{lemma}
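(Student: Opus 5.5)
The plan is the standard approximate-identity argument, run through continuity of translation in $\mathcal{L}^2(\mathbb{R})$ and Minkowski's integral inequality.

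\textbf{Step 1: write the error as an average of translation errors.} Write $\tau_h S(t):=S(t-h)$. Since $\int K_\tau=1$, for every $t$
$$(K_\tau*S)(t)-S(t)=\int_{\mathbb{R}} K_\tau(h)\,\big(S(t-h)-S(t)\big)\,dh.$$
Applying Minkowski's integral inequality, which is valid because $K_\tau\ge0$, gives
$$\|K_\tau*S-S\|_{\mathcal{L}^2(\mathbb{R})}\le \int_{\mathbb{R}} K_\tau(h)\,\omega(h)\,dh,\qquad \omega(h):=\|\tau_hS-S\|_{\mathcal{L}^2(\mathbb{R})}.$$
By the triangle inequality and translation invariance, $\omega(h)\le 2\|S\|_{\mathcal{L}^2}$ for all $h$.

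\textbf{Step 2 (main obstacle): show $\omega(h)\to0$ as $h\to0$.} This is the one substantive ingredient, continuity of translation in $\mathcal{L}^2$. I would prove it by density.
\begin{itemize}
\item Given $\varepsilon>0$, choose $g\in C_c(\mathbb{R})$ with $\|S-g\|_{\mathcal{L}^2}<\varepsilon$.
\item Since $g$ is uniformly continuous and has compact support, $\|\tau_hg-g\|_{\mathcal{L}^2}\to0$; for $|h|\le1$ dominated convergence applies, with the support of the integrand contained in a fixed bounded set.
\item Then $\omega(h)\le 2\varepsilon+\|\tau_hg-g\|_{\mathcal{L}^2}$, so $\limsup_{h\to0}\omega(h)\le2\varepsilon$.
\end{itemize}

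\textbf{Step 3: split the integral.} Fix $\varepsilon$ and choose $\delta$ with $\omega(h)<\varepsilon$ for $|h|\le\delta$. Splitting the integral from Step 1 at $|h|=\delta$ gives
$$\|K_\tau*S-S\|_{\mathcal{L}^2}\le \varepsilon\int_{|h|\le\delta}K_\tau+2\|S\|_{\mathcal{L}^2}\int_{|h|>\delta}K_\tau\le\varepsilon+2\|S\|_{\mathcal{L}^2}\int_{|h|>\delta}K_\tau.$$
By the tail condition in \cref{mollify_condition}, the last term tends to $0$ as $\tau\to0^+$. Hence $\limsup_{\tau\to0^+}\|K_\tau*S-S\|_{\mathcal{L}^2}\le\varepsilon$ for every $\varepsilon>0$, which is the claim.

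\textbf{Smoothness of $K_\tau*S$.} The lemma's hypotheses do not by themselves give $K_\tau*S\in C^\infty$: they only say $K_\tau\in C^\infty$. We need integrability of the derivatives, so I would add the mild hypothesis that $\partial_t^kK_\tau\in\mathcal{L}^2(\mathbb{R})$ for each $k$. This holds for the heat kernel, which is a Schwartz function.
\begin{itemize}
\item $S$ is supported in $[0,T]$ and lies in $\mathcal{L}^2$, so $S\in\mathcal{L}^1$ as well.
\item Differentiate under the integral sign in $(K_\tau*S)(t)=\int K_\tau(t-s)S(s)\,ds$. The derivative $\partial_t^kK_\tau(t-s)$ is continuous in $(t,s)$ and hence bounded on $[t_0-1,t_0+1]\times[0,T]$. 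Together with $S\in\mathcal{L}^1([0,T])$, this gives a dominating function, and the derivative of order $k$ is continuous by dominated convergence.
\end{itemize}
Iterating over $k$ gives $K_\tau*S\in C^\infty(\mathbb{R})$. Finally, I would note that the heat kernel $K_\tau$ of \cref{subsec:smoothing} satisfies \cref{mollify_condition}, since $\int_{|t|>\delta}K_\tau=\int_{|u|>\delta/\sqrt{\tau}}K_1(u)\,du\to0$.
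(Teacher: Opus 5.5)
Your proposal is correct and matches the paper's approach: the paper's proof just cites the standard approximation-to-the-identity theorem in $\mathcal{L}^2(\mathbb{R})$ \cite{stein2009real,brezis2011functional}, and your argument (Minkowski's integral inequality, continuity of translation, then splitting at $|h|=\delta$) is that standard proof written out. One correction: the extra hypothesis $\partial_t^kK_\tau\in\mathcal{L}^2(\mathbb{R})$ is unnecessary. Your own differentiation-under-the-integral argument uses only that $\partial_t^kK_\tau$ is continuous, hence bounded on compact sets, and that $S\in\mathcal{L}^1$ is supported in $[0,T]$, so $K_\tau*S\in C^\infty(\mathbb{R})$ already follows from the lemma's hypotheses as stated.
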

\begin{proof}
This is a standard approximation-to-the-identity result in $\mathcal{L}^2(\mathbb{R})$; see, \cite{stein2009real,brezis2011functional}.
\end{proof}

\begin{corollary}\label{cor:heat_moli}
Let
$$K_{\tau}(t)=\frac{1}{\sqrt{4\pi\tau}}\exp\!\Big(-\frac{t^2}{4\tau}\Big)$$
be the heat kernel and define $S_{\tau}:=K_{\tau}*S$. Suppose $S\in \mathcal{L}^2(\mathbb{R})$ and $S$ is supported on $[0,T]$. Then:
\begin{align}
\text{i) } & \|S_{\tau}-S\|_{\mathcal{L}^2(\mathbb{R})}\to0\text{ as }tau\to0^+,\label{approx_id_S}\\
\text{ii } & \|S_{\tau}\|_{\mathcal{L}^2(\mathbb{R})}\to0\text{ as } \tau\to\infty.\label{diffusion_S}
\end{align}
\end{corollary}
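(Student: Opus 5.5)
Part i) follows from Lemma~\ref{lem:mollify_Lp} once we check that the heat kernel meets the hypotheses in \eqref{mollify_condition}. Three facts are needed.

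\begin{itemize}
\item \textbf{Smoothness and positivity.} $K_\tau$ is smooth and strictly positive for every $\tau>0$.
\item \textbf{Unit mass.} Substituting $u=t/\sqrt{4\tau}$ reduces $\int_{\mathbb{R}}K_\tau(t)\,dt$ to $\pi^{-1/2}\int_{\mathbb{R}} e^{-u^2}\,du = 1$.
\item \textbf{Vanishing tails.} The same substitution gives
$$\int_{|t|>\delta}K_\tau(t)\,dt=\pi^{-1/2}\int_{|u|>\delta/\sqrt{4\tau}}e^{-u^2}\,du .$$
The integration region shrinks to the empty set as $\tau\to0^+$, so the right-hand side tends to $0$ by dominated convergence, since the integrand is integrable on $\mathbb{R}$.
\end{itemize}

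With these in hand, Lemma~\ref{lem:mollify_Lp} applies directly and yields \eqref{approx_id_S}. The lemma also gives $S_\tau\in C^\infty(\mathbb{R})$.

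For part ii), note first that $S\in\mathcal{L}^2$ with support in the bounded interval $[0,T]$. By Cauchy--Schwarz, $\|S\|_{\mathcal{L}^1}\le \sqrt{T}\,\|S\|_{\mathcal{L}^2}<\infty$, so $S\in\mathcal{L}^1(\mathbb{R})$.

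We then apply Young's convolution inequality in the form $\|K_\tau*S\|_{\mathcal{L}^2}\le \|K_\tau\|_{\mathcal{L}^2}\|S\|_{\mathcal{L}^1}$. A Gaussian computation gives
$$\|K_\tau\|_{\mathcal{L}^2}^2=\frac{1}{4\pi\tau}\int_{\mathbb{R}} e^{-t^2/(2\tau)}\,dt=\frac{\sqrt{2\pi\tau}}{4\pi\tau}=(8\pi\tau)^{-1/2}.$$
Hence $\|S_\tau\|_{\mathcal{L}^2}\le (8\pi\tau)^{-1/4}\sqrt{T}\,\|S\|_{\mathcal{L}^2}\to0$ as $\tau\to\infty$. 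This proves \eqref{diffusion_S}.

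An alternative route is Plancherel. The Fourier transform of the heat kernel is $\hat K_\tau(\xi)=e^{-\tau\xi^2}$ (up to normalization), so $\|S_\tau\|_{\mathcal{L}^2}^2=\int e^{-2\tau\xi^2}|\hat S(\xi)|^2\,d\xi$. This tends to $0$ by dominated convergence, because $e^{-2\tau\xi^2}\to 0$ for every $\xi\ne 0$ and the single point $\xi=0$ has measure zero. This argument needs only $S\in\mathcal{L}^2$.

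Part ii) is the only step requiring a choice. The key point is that the $\mathcal{L}^1$ bound from compact support is what makes the Young estimate decay. Bounding by $\|K_\tau\|_{\mathcal{L}^1}\|S\|_{\mathcal{L}^2}$ would only give the non-decaying bound $\|S\|_{\mathcal{L}^2}$. The Plancherel argument is the fallback if one prefers to avoid using the support assumption.
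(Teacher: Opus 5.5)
Your proposal is correct and follows the paper's proof. For part i) you check that the heat kernel satisfies the approximate-identity conditions \eqref{mollify_condition} and invoke Lemma~\ref{lem:mollify_Lp}. For part ii) you use the $\mathcal{L}^1$ embedding coming from compact support, together with Young's inequality and $\|K_\tau\|_{\mathcal{L}^2}\propto\tau^{-1/4}$.

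The differences are matters of detail rather than method. You make explicit what the paper leaves implicit: the unit-mass and tail computations, the Cauchy--Schwarz constant $\sqrt{T}$, and the exact value $\|K_\tau\|_{\mathcal{L}^2}=(8\pi\tau)^{-1/4}$. Your Plancherel remark also correctly shows that ii) holds for every $S\in\mathcal{L}^2(\mathbb{R})$, though without a rate.
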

\begin{proof}
i) The heat kernel $K_{\tau}$ satisfies~\cref{mollify_condition}. Therefore, by Lemma~\ref{lem:mollify_Lp},
$$\|S_{\tau}-S\|_{\mathcal{L}^2(\mathbb{R})}\to0
\quad\text{as}\quad \tau\to0^+.$$
ii) Since $S$ is supported on $[0,T]$, we have $S\in\mathcal{L}^1(\mathbb{R})$ ($\mathcal{L}^{p}$ embedding). By Young's inequality,
$$\|S_{\tau}\|_{\mathcal{L}^2(\mathbb{R})}
=\|K_{\tau}*S\|_{\mathcal{L}^2(\mathbb{R})}
\le \|K_{\tau}\|_{\mathcal{L}^2(\mathbb{R})}\,\|S\|_{\mathcal{L}^1(\mathbb{R})}.$$
For the heat kernel, $\|K_{\tau}\|_{\mathcal{L}^2(\mathbb{R})}\propto \tau^{-1/4}$, and hence $\|S_{\tau}\|_{\mathcal{L}^2(\mathbb{R})}\to0$ as $\tau\to\infty$.
\end{proof}

\begin{lemma}\label{lem:lipschitz} Fix $\vec p \in P_{\mathrm{o}}$ and $\tau_1,\tau_2>0$. Let $\vec y_{\tau_1}(t)=\vec y_{\tau_1}(t;\vec{p})$ and $\vec y_{\tau_2}(t)=\vec y_{\tau_2}(t;\vec{p})$ denote the solutions of~\cref{de_tau} driven by the smoothed inputs $S_{\tau_1}$ and $S_{\tau_2}$, respectively, with the same initial condition as the observation. That is,
\begin{align}
\frac{d}{dt}\vec y_{\tau_1}(t) &= F\!\big(\vec y_{\tau_1}(t),\vec p,S_{\tau_1}(t)\big), \label{de_ytau1}\\
\frac{d}{dt}\vec y_{\tau_2}(t) &= F\!\big(\vec y_{\tau_2}(t),\vec p,S_{\tau_2}(t)\big), \label{de_ytau2}\\
\vec y_{\tau_1}(0) &= \vec y_{\tau_2}(0)=\vec y_{\mathrm{o}}(0). \label{de_ytau_ic}
\end{align}
Then, under Assumption~\ref{assumptionB2}, the following stability estimate holds:
\begin{equation}\label{lipschitz}
\|\vec{y}_{\tau_1}(\cdot;\vec p)-\vec{y}_{\tau_2}(\cdot;\vec p)\|_{\mathcal{L}^{2}([0,T])}
\le C(T)\, \|S_{\tau_1}-S_{\tau_2}\|_{\mathcal{L}^{2}([0,T])},
\end{equation}
where the constant $C(T)>0$ depends only on $C_1$, $C_2$, and the terminal time $T\in(0,\infty)$.
\end{lemma}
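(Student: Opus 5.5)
The plan is a standard Gronwall argument applied to the integral form of the two initial value problems. Set $\vec e(t):=\vec y_{\tau_1}(t;\vec p)-\vec y_{\tau_2}(t;\vec p)$. By \cref{de_ytau_ic}, $\vec e(0)=\vec 0$. Subtracting \cref{de_ytau2} from \cref{de_ytau1} and integrating from $0$ to $t$ gives
\begin{equation*}
\vec e(t)=\int_0^t \Big[F\big(\vec y_{\tau_1}(s),\vec p,S_{\tau_1}(s)\big)-F\big(\vec y_{\tau_2}(s),\vec p,S_{\tau_2}(s)\big)\Big]\,ds .
\end{equation*}
Next I insert the intermediate term $F(\vec y_{\tau_2}(s),\vec p,S_{\tau_1}(s))$ and use the triangle inequality. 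The two Lipschitz bounds of Assumption~\ref{assumptionB2} then give the pointwise estimate
\begin{equation*}
\|\vec e(t)\|_2\le C_1\int_0^t\|\vec e(s)\|_2\,ds + C_2\int_0^t |S_{\tau_1}(s)-S_{\tau_2}(s)|\,ds .
\end{equation*}
Well-posedness of both trajectories on $[0,T]$ is guaranteed by the remark following Assumption~\ref{assumptionB2}. The smoothed inputs $S_{\tau_i}$ are continuous, and in fact $C^\infty$ by Lemma~\ref{lem:mollify_Lp}, so $\vec e$ is continuous and all the integrals make sense.

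The forcing term is controlled by Cauchy--Schwarz on $[0,t]\subset[0,T]$:
\begin{equation*}
\int_0^t|S_{\tau_1}-S_{\tau_2}|\,ds\le \sqrt{T}\,\|S_{\tau_1}-S_{\tau_2}\|_{\mathcal L^2([0,T])}=:\sqrt{T}\,\Delta .
\end{equation*}
This bound is uniform in $t$, so the integral form of Gronwall's inequality applies with a constant forcing term. It yields
\begin{equation*}
\|\vec e(t)\|_2\le C_2\sqrt{T}\,\Delta\, e^{C_1 t}\le C_2\sqrt{T}\,e^{C_1T}\,\Delta\qquad\text{for all } t\in[0,T].
\end{equation*}

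To finish, I square this sup-bound and integrate over $[0,T]$. This gives $\|\vec e\|_{\mathcal L^2([0,T])}\le \sqrt{T}\sup_t\|\vec e(t)\|_2\le C_2\,T\,e^{C_1T}\,\Delta$. So \cref{lipschitz} holds with $C(T)=C_2Te^{C_1T}$, which depends only on $C_1$, $C_2$ and $T$, as claimed.

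I expect no serious obstacle. The only point needing care is applying the Lipschitz condition in $S$ pointwise in $s$ with the same $\vec y$ argument, which is exactly why the intermediate term is inserted. I also need to note that the constants in Assumption~\ref{assumptionB2} are uniform over $\vec p\in P_{\mathrm o}$ and over all $\vec y$, so no a priori bound on the trajectories is required. The same argument applies verbatim when one of the inputs is the unsmoothed $S\in\mathcal L^2$, interpreting solutions in the Carathéodory sense, since only integrability of $s\mapsto F(\vec y(s),\vec p,S(s))$ is used. This is the form needed for Step B.2.
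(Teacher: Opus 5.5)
Your proof is correct and follows essentially the same route as the paper: the integral form of the difference, an add-and-subtract of an intermediate term, the two Lipschitz bounds of Assumption~\ref{assumptionB2}, Gronwall, and integration over $[0,T]$. The only difference is cosmetic. You apply Gronwall directly to $\|\vec e(t)\|_2$ and use Cauchy--Schwarz only on the forcing term, whereas the paper squares first and applies Gronwall to $\|\vec e(t)\|_2^2$; your version yields the explicit constant $C(T)=C_2Te^{C_1T}$, and your Carath\'eodory remark also covers the $\tau_2=0$ case that the paper invokes in Corollary~\ref{cor:ytoL} but that the lemma, stated for $\tau_1,\tau_2>0$, does not formally include.
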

\begin{proof}
Subtracting~\cref{de_ytau2} from~\cref{de_ytau1} and integrating over $[0,t]$ yield
\begin{align}
\vec{y}_{\tau_1}(t)-\vec{y}_{\tau_2}(t)
&=\int_{0}^{t}\Big(F(\vec{y}_{\tau_1}(v),\vec{p},S_{\tau_1}(v)) - F(\vec{y}_{\tau_1}(v),\vec{p},S_{\tau_2}(v))\Big)\,dv \nonumber\\
&\quad+\int_{0}^{t}\Big(F(\vec{y}_{\tau_1}(v),\vec{p},S_{\tau_2}(v)) - F(\vec{y}_{\tau_2}(v),\vec{p},S_{\tau_2}(v))\Big)\,dv, \nonumber
\end{align}
where we used the initial condition~\cref{de_ytau_ic} to simplify the left-hand side.

Taking $L^2$ norms, squaring both sides, and applying $(a+b)^2\le 2(a^2+b^2)$, we obtain
\begin{align}
\frac{1}{2}\left\Vert \vec{y}_{\tau_1}(t)-\vec{y}_{\tau_2}(t)\right\Vert_2 ^2
&\leq \left\Vert \int_{0}^{t}\Big[F(\vec{y}_{\tau_1}(v),\vec{p},S_{\tau_1}(v)) - F(\vec{y}_{\tau_1}(v),\vec{p},S_{\tau_2}(v)) \Big]\,dv\right\Vert_2^2 \nonumber\\
&\quad + \left\Vert\int_{0}^{t}\Big[F(\vec{y}_{\tau_1}(v),\vec{p},S_{\tau_2}(v)) - F(\vec{y}_{\tau_2}(v),\vec{p},S_{\tau_2}(v))\Big]\,dv\right\Vert_2^2. \label{ineq_F}
\end{align}

By H\"older's inequality and Assumption~\ref{assumptionB2}, the first term on the right-hand side of~\cref{ineq_F} is bounded as
\begin{align}
\left\Vert \int_{0}^{t}\Big[F(\vec{y}_{\tau_1}(v),\vec{p},S_{\tau_1}(v)) - F(\vec{y}_{\tau_1}(v),\vec{p},S_{\tau_2}(v)) \Big]\,dv\right\Vert_2^2 & \le C_2^2\, t \int_{0}^{t}\left|S_{\tau_1}(v)-S_{\tau_2}(v)\right|^2 dv\nonumber\\
& \le C_2^2\, T \int_{0}^{t}\left|S_{\tau_1}(v)-S_{\tau_2}(v)\right|^2 dv.\label{ineq_S}
\end{align}
where $C_2$ is the global constant as in the Assumption~\ref{assumptionB2}. Similarly, the second term on the right-hand side of~\cref{ineq_F} admits the bound
\begin{equation}\label{ineq_y}
\left\Vert \int_{0}^{t}\Big[F(\vec{y}_{\tau_1}(v),\vec{p},S_{\tau_2}(v)) - F(\vec{y}_{\tau_2}(v),\vec{p},S_{\tau_2}(v)) \Big]\,dv\right\Vert_2 ^2 \leq C_1^2\, T \int_{0}^{t}\left\Vert \vec{y}_{\tau_1}(v)-\vec{y}_{\tau_2}(v)\right\Vert_2 ^2 dv.
\end{equation}
By combining~\cref{ineq_F},~\cref{ineq_S}, and~\cref{ineq_y}, we obtain
\begin{equation}\label{ineq_simple}
\left\Vert \vec{y}_{\tau_1}(t)-\vec{y}_{\tau_2}(t)\right\Vert_2 ^2
\leq \bar{C}(T)\left(\int_{0}^{t}\left|S_{\tau_1}(v)-S_{\tau_2}(v)\right|^2\,dv
+\int_{0}^{t}\left\Vert \vec{y}_{\tau_1}(v)-\vec{y}_{\tau_2}(v)\right\Vert_2 ^2\,dv \right),
\end{equation}
where one may take $\bar{C}(T)=2T\max\{C_2^2,\,C_1^2\}$.

Applying Gr\"onwall's inequality \cite{ye2007generalized} to~\cref{ineq_simple} on $[0,T]$ yields, for all $t\in[0,T]$,
$$\|\vec{y}_{\tau_1}(t)-\vec{y}_{\tau_2}(t)\|_2^2
\le \tilde{C}(T)\int_0^t |S_{\tau_1}(v)-S_{\tau_2}(v)|^2\,dv.$$
Integrating this estimate over $t\in[0,T]$ gives
$$\|\vec{y}_{\tau_1}(\cdot;\vec p)-\vec{y}_{\tau_2}(\cdot;\vec p)\|_{\mathcal{L}^{2}([0,T])}
\le C(T)\,\|S_{\tau_1}-S_{\tau_2}\|_{\mathcal{L}^{2}([0,T])},$$
which proves~\cref{lipschitz}.
\end{proof}

\begin{corollary}\label{cor:ytoL}
Let $L_{\tau}(\vec{p})$ and $L(\vec{p})$ be the losses defined in~\cref{L_tau_cont} and~\cref{L_cont}, respectively. Then
\begin{equation}\label{ineq_ytoL}
\|L_{\tau}^{1/2}-L^{1/2}\|_{\mathcal{L}^{2}(P_{\mathrm{o}})}
\leq C\|S_{\tau}-S\|_{\mathcal{L}^{2}([0,T])},
\end{equation}
where the constant $C>0$ depends only on the terminal time $T$ and the set $P_{\mathrm{o}}$.
\end{corollary}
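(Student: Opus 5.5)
The plan is to reduce Corollary~\ref{cor:ytoL} to a pointwise-in-$\vec p$ estimate, then integrate over the compact set $P_{\mathrm{o}}$. Since $L_\tau^{1/2}(\vec p)=\|\vec y_\tau(\cdot;\vec p)-\vec y_{\mathrm o}\|_{\mathcal L^2([0,T])}$ and $L^{1/2}(\vec p)=\|\vec y(\cdot;\vec p)-\vec y_{\mathrm o}\|_{\mathcal L^2([0,T])}$ are both norms of differences, the reverse triangle inequality in $\mathcal L^2([0,T];\mathbb R^{d_y})$ gives, for every fixed $\vec p\in P_{\mathrm{o}}$,
\begin{equation*}
\big|L_\tau^{1/2}(\vec p)-L^{1/2}(\vec p)\big|\le \|\vec y_\tau(\cdot;\vec p)-\vec y(\cdot;\vec p)\|_{\mathcal L^2([0,T])}.
\end{equation*}
This is why the statement is phrased with square roots: no bound on the size of the trajectories is needed.

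Next I bound the right-hand side using Lemma~\ref{lem:lipschitz}. Formally, that lemma compares two positive smoothing scales $\tau_1,\tau_2>0$. However, its proof uses only the Lipschitz bounds of Assumption~\ref{assumptionB2} and Gr\"onwall's inequality, and never the smoothness of the inputs. So I would note that the same argument applies verbatim when $S_{\tau_2}$ is replaced by the unsmoothed input $S=S_{\tau_0}$. Measurability and local integrability of $S\in\mathcal L^2$ suffice for Carath\'eodory solutions of~\cref{de}, which Assumption~\ref{assumptionB2} guarantees. This yields
\begin{equation*}
\|\vec y_\tau(\cdot;\vec p)-\vec y(\cdot;\vec p)\|_{\mathcal L^2([0,T])}\le C(T)\,\|S_\tau-S\|_{\mathcal L^2([0,T])},
\end{equation*}
where both trajectories share the initial condition $\vec y_{\mathrm o}(0)$. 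The key point is that $C(T)$ depends only on $C_1,C_2,T$ and not on $\vec p$, because Assumption~\ref{assumptionB2} gives constants uniform over $\vec p\in P_{\mathrm o}$.

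Finally, I square the pointwise bound and integrate over $\vec p\in P_{\mathrm o}$:
\begin{equation*}
\|L_\tau^{1/2}-L^{1/2}\|_{\mathcal L^2(P_{\mathrm o})}^2\le |P_{\mathrm o}|\,C(T)^2\,\|S_\tau-S\|_{\mathcal L^2([0,T])}^2 .
\end{equation*}
Here $|P_{\mathrm o}|<\infty$ is the Lebesgue measure of the compact set from Assumption~\ref{assumptionB1}. Taking square roots gives~\cref{ineq_ytoL} with $C=|P_{\mathrm o}|^{1/2}C(T)$. In fact the argument also gives the stronger sup-norm bound $\delta_\tau\le C(T)\|S_\tau-S\|_{\mathcal L^2([0,T])}$, which is useful later.

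I expect the main obstacle to be the technical points rather than the estimates. One is justifying that Lemma~\ref{lem:lipschitz} extends to the limiting case $\tau_2=0$. The other is measurability of $\vec p\mapsto L_\tau^{1/2}(\vec p)$, which is needed for the $\mathcal L^2(P_{\mathrm o})$ norm to make sense. I would handle measurability by showing that $\vec p\mapsto\vec y(\cdot;\vec p)$ is continuous into $\mathcal L^2$. This follows from the $C^1$ dependence of $F$ on $\vec p$ with locally bounded $\partial_{\vec p}F$, together with another Gr\"onwall argument. Hence $L$ and $L_\tau$ are continuous on $P_{\mathrm o}$.
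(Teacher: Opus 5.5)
Your proposal is correct and follows the paper's proof step for step. The paper likewise applies the reverse triangle inequality pointwise in $\vec p$, invokes Lemma~\ref{lem:lipschitz} with $\tau_1=\tau$ and $\tau_2=0$ (so $S_{\tau_2}=S$), and then squares and integrates over $P_{\mathrm{o}}$ to get $C=C(T)\,|P_{\mathrm{o}}|^{1/2}$; your added justifications (that the Gr\"onwall argument never uses smoothness of the input, so it covers the unsmoothed case, and that $\vec p\mapsto L_\tau^{1/2}(\vec p)$ is measurable) address points the paper leaves implicit.
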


\begin{proof}
We first bound $\big|L_{\tau}^{1/2}(\vec{p})-L^{1/2}(\vec{p})\big|$. By the reverse triangle inequality in $\mathcal{L}^2([0,T])$,
\begin{align}
\big|L_{\tau}^{1/2}(\vec{p})-L^{1/2}(\vec{p})\big|
&=\Big|\|\vec{y}_{\tau}(\cdot;\vec{p})-\vec{y}_{\mathrm{o}}(\cdot)\|_{\mathcal{L}^2([0,T])}
-\|\vec{y}(\cdot;\vec{p})-\vec{y}_{\mathrm{o}}(\cdot)\|_{\mathcal{L}^2([0,T])}\Big| \nonumber\\
&\le \|\vec{y}_{\tau}(\cdot;\vec{p})-\vec{y}(\cdot;\vec{p})\|_{\mathcal{L}^2([0,T])}
\le C(T)\,\|S_{\tau}-S\|_{\mathcal{L}^{2}([0,T])}, \label{L_diff_sq}
\end{align}
where the last inequality follows from Lemma~\ref{lem:lipschitz} (with $\tau_1=\tau$ and $\tau_2=0$, i.e., $S_{\tau_2}=S$).

Squaring both sides of~\cref{L_diff_sq} and integrating over $\vec{p}\in P_{\mathrm{o}}$ yield
$$\|L_{\tau}^{1/2}-L^{1/2}\|_{\mathcal{L}^{2}(P_{\mathrm{o}})}
\le C(T)|P_{\mathrm{o}}|^{1/2}\\|S_{\tau}-S\|_{\mathcal{L}^{2}([0,T])},$$
where $|P_{\mathrm{o}}|$ denotes the Lebesgue measure of $P_{\mathrm{o}}$. By Assumption~\ref{assumptionB1}, $P_{\mathrm{o}}$ is compact in $\mathbb{R}^{d_p}$ and hence has finite measure.
\end{proof}

\begin{corollary}\label{cor:delta_tau}
Given $S\in\mathcal{L}^{2}(\mathbb{R})$, let $\delta_{\tau}:=\sup_{\vec{p}\in P_{\mathrm{o}}}\big|L_{\tau}^{1/2}(\vec{p})-L^{1/2}(\vec{p})\big|.$
Then, for any $T\in(0,\infty)$, $\delta_{\tau}$ is uniformly bounded over $\tau>0$. That is, there exists a constant $M>0$ such that
$$0\le \delta_{\tau}\le M,\qquad \forall\,\tau>0.$$
\end{corollary}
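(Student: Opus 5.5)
We bound $\delta_\tau$ by reducing it to the pointwise estimate already obtained in the proof of Corollary~\ref{cor:ytoL}, and then show that the resulting bound is uniform in $\tau$. For every $\vec p\in P_{\mathrm{o}}$ the reverse triangle inequality in $\mathcal{L}^2([0,T])$, followed by Lemma~\ref{lem:lipschitz} with $\tau_1=\tau$ and $\tau_2=0$ (i.e.\ $S_{\tau_2}=S$), gives
\begin{equation*}
\big|L_\tau^{1/2}(\vec p)-L^{1/2}(\vec p)\big|\le \|\vec y_\tau(\cdot;\vec p)-\vec y(\cdot;\vec p)\|_{\mathcal{L}^2([0,T])}\le C(T)\,\|S_\tau-S\|_{\mathcal{L}^2([0,T])}.
\end{equation*}
The key point is that $C(T)$ depends only on $C_1$, $C_2$ and $T$. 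By Assumption~\ref{assumptionB2} these Lipschitz constants are uniform over $\vec p\in P_{\mathrm{o}}$, so the right-hand side does not depend on $\vec p$. Taking the supremum over $P_{\mathrm{o}}$ therefore yields
\begin{equation*}
\delta_\tau\le C(T)\,\|S_\tau-S\|_{\mathcal{L}^2([0,T])}.
\end{equation*}
Note that this uses the pointwise bound, not the $\mathcal{L}^2(P_{\mathrm{o}})$ bound stated in Corollary~\ref{cor:ytoL}.

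It remains to bound $\|S_\tau-S\|_{\mathcal{L}^2([0,T])}$ uniformly in $\tau>0$. Since $\|K_\tau\|_{\mathcal{L}^1(\mathbb{R})}=1$, Young's inequality gives $\|S_\tau\|_{\mathcal{L}^2(\mathbb{R})}\le \|S\|_{\mathcal{L}^2(\mathbb{R})}$. Hence
\begin{equation*}
\|S_\tau-S\|_{\mathcal{L}^2([0,T])}\le \|S_\tau-S\|_{\mathcal{L}^2(\mathbb{R})}\le 2\|S\|_{\mathcal{L}^2(\mathbb{R})}
\end{equation*}
for all $\tau>0$. Setting $M:=2C(T)\|S\|_{\mathcal{L}^2(\mathbb{R})}$ gives $0\le\delta_\tau\le M$; nonnegativity is immediate because $\delta_\tau$ is a supremum of absolute values. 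As a by-product, Corollary~\ref{cor:heat_moli}\,i) then yields $\delta_\tau\to0$ as $\tau\to0^+$, which is the form of the result used in Step~B.4.

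The main thing to verify is that the Grönwall constant in Lemma~\ref{lem:lipschitz} is genuinely independent of $\vec p$ and of $\tau$. Inspecting that proof, the constant is $\bar C(T)=2T\max\{C_1^2,C_2^2\}$, and the Grönwall step only exponentiates this quantity, so no $\vec p$- or $\tau$-dependence enters. A second point is that $L^{1/2}(\vec p)$ must be finite, so that $\delta_\tau$ is a difference of finite quantities. This follows from the global Lipschitz bound in Assumption~\ref{assumptionB2}, which gives existence of a bounded solution on $[0,T]$, together with $\vec y_{\mathrm{o}}\in\mathcal{L}^2([0,T])$.
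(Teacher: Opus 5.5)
Your proof is correct and follows the paper's skeleton. You use the pointwise estimate \eqref{L_diff_sq}, obtained from the reverse triangle inequality and Lemma~\ref{lem:lipschitz} with $S_{\tau_2}=S$; its right-hand side does not depend on $\vec p$. You then bound $\|S_\tau-S\|_{\mathcal{L}^2([0,T])}$ uniformly in $\tau$ by the triangle inequality.

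The one real difference is how you bound $\sup_{\tau>0}\|S_\tau\|_{\mathcal{L}^2(\mathbb{R})}$. The paper deduces it from Corollary~\ref{cor:heat_moli}\,ii), that is, from $\|S_\tau\|_{\mathcal{L}^2}\le\|K_\tau\|_{\mathcal{L}^2}\|S\|_{\mathcal{L}^1}\propto\tau^{-1/4}\to0$ as $\tau\to\infty$. Taken literally, this controls only large $\tau$, because the bound blows up as $\tau\to0^+$; part~i) has to be invoked separately for small $\tau$, and the paper does not say so. Your contraction $\|S_\tau\|_{\mathcal{L}^2}\le\|K_\tau\|_{\mathcal{L}^1}\|S\|_{\mathcal{L}^2}=\|S\|_{\mathcal{L}^2}$ covers every $\tau>0$ in one line. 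It also gives the explicit constant $M=2C(T)\|S\|_{\mathcal{L}^2(\mathbb{R})}$ and does not use $S\in\mathcal{L}^1$.

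Your closing observation is right as well. $\delta_\tau\to0$ follows from the pointwise estimate, not from the $\mathcal{L}^2(P_{\mathrm{o}})$ statement of Corollary~\ref{cor:ytoL}, which is what the proof of Theorem~\ref{thm:p_convergence} cites.
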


\begin{proof}
Recall~\cref{L_diff_sq}:
$$\big|L_{\tau}^{1/2}(\vec{p})-L^{1/2}(\vec{p})\big|
\le C(T)\,\|S_{\tau}-S\|_{\mathcal{L}^{2}([0,T])}.$$
By~\cref{diffusion_S} in Corollary~\ref{cor:heat_moli}, $\|S_{\tau}\|_{\mathcal{L}^{2}(\mathbb{R})}\to0$ as $\tau\to\infty$, and hence $\sup_{\tau>0}\|S_{\tau}\|_{\mathcal{L}^{2}(\mathbb{R})}<\infty$. Let
$$M_1:=\sup_{\tau>0}\|S_{\tau}\|_{\mathcal{L}^{2}(\mathbb{R})}.$$
Then, for all $\tau>0$,
$$\|S_{\tau}-S\|_{\mathcal{L}^{2}([0,T])}
\le \|S_{\tau}\|_{\mathcal{L}^{2}([0,T])}+\|S\|_{\mathcal{L}^{2}([0,T])}
\le M_1+\|S\|_{\mathcal{L}^{2}([0,T])}.$$
Therefore,
$$\big|L_{\tau}^{1/2}(\vec{p})-L^{1/2}(\vec{p})\big|
\le C(T)\big(M_1+\|S\|_{\mathcal{L}^{2}([0,T])}\big)=:M,$$
and taking the supremum over $\vec{p}\in P_{\mathrm{o}}$ proves the claim.
\end{proof}

\begin{theorem}\label{thm:p_convergence}
Let $\vec{p}_{\tau}\in P_{\mathrm{o}}$ be a minimizer of $L_{\tau}$ for each $\tau>0$.  

\emph{(i) Without Assumption~\ref{assumptionB3}.} We have
$$\lim_{\tau\to0^+} L(\vec{p}_{\tau}) = L(\vec{p}_{\mathrm{o}}).$$
In particular, if the limit $\lim_{\tau\to0^+}\vec{p}_{\tau}$ exists, then it belongs to the set of global minimizers of $L$ on $P_{\mathrm{o}}$, i.e.,
$$\lim_{\tau\to0^+}\vec{p}_{\tau}\in \arg\min_{\vec{p}\in P_{\mathrm{o}}} L(\vec{p}).$$

\emph{(ii) Under Assumption~\ref{assumptionB3}.} We further have
$$\vec{p}_{\tau}\to \vec{p}_{\mathrm{o}}
\quad\text{as}\quad \tau\to0^+,$$
and there exists a constant $C>0$ such that
$$\|\vec{p}_{\tau}-\vec{p}_{\mathrm{o}}\|_2 \le C\,\delta_{\tau},
\qquad \tau\to0^+,$$
where $\delta_{\tau}=\sup_{\vec{p}\in P_{\mathrm{o}}}\big|L_{\tau}^{1/2}(\vec{p})-L^{1/2}(\vec{p})\big|$.
\end{theorem}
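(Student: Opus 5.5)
The plan is to exploit the fact that the unsmoothed problem is \emph{realizable}. By~\cref{de_optimal}, $\vec y(\cdot;\vec p_{\mathrm o})=\vec y_{\mathrm o}$, hence $L(\vec p_{\mathrm o})=0$. Writing $G(\vec p):=\vec y(\cdot;\vec p)-\vec y_{\mathrm o}(\cdot)\in\mathcal L^2([0,T])$, we have $L^{1/2}(\vec p)=\|G(\vec p)\|_{\mathcal L^2([0,T])}$. The linear rate in (ii) will follow from a \emph{first-order} lower bound on $\|G\|$, not from the quadratic growth in Assumption~\ref{assumptionB3} (which alone gives only $\mathcal O(\delta_\tau^{1/2})$).

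\textbf{Step 1: part (i).} Let $\vec p_\tau$ minimize $L_\tau$ over $P_{\mathrm o}$. Using the definition of $\delta_\tau$ twice and minimality once,
\[
L^{1/2}(\vec p_\tau)\le L_\tau^{1/2}(\vec p_\tau)+\delta_\tau\le L_\tau^{1/2}(\vec p_{\mathrm o})+\delta_\tau\le L^{1/2}(\vec p_{\mathrm o})+2\delta_\tau .
\]
Here $\delta_\tau\le C(T)\|S_\tau-S\|_{\mathcal L^2([0,T])}\to0$, by~\cref{L_diff_sq}, which holds uniformly in $\vec p$, together with Corollary~\ref{cor:heat_moli}. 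Combined with $L(\vec p_{\mathrm o})\le L(\vec p_\tau)$, this gives $L(\vec p_\tau)\to L(\vec p_{\mathrm o})$. If $\vec p_\tau\to\bar{\vec p}$, then $\bar{\vec p}\in P_{\mathrm o}$ by compactness. Continuity of $\vec p\mapsto L(\vec p)$ follows from the $C^1$ dependence of $F$ on $\vec p$ in Assumption~\ref{assumptionB2} and a Gr\"onwall argument as in Lemma~\ref{lem:lipschitz}. Hence $L(\bar{\vec p})=\min L$.

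\textbf{Step 2: convergence in (ii).} Under Assumption~\ref{assumptionB3}, every accumulation point is the unique minimizer $\vec p_{\mathrm o}$. Compactness of $P_{\mathrm o}$ then gives $\vec p_\tau\to\vec p_{\mathrm o}$.

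\textbf{Step 3: linear bound.} The goal is $\kappa>0$ with
\[
\|G(\vec p)\|_{\mathcal L^2([0,T])}\ge\kappa\|\vec p-\vec p_{\mathrm o}\|_2\qquad\text{for all }\vec p\in P_{\mathrm o}.
\]
Given this, Step~1 yields $\kappa\|\vec p_\tau-\vec p_{\mathrm o}\|_2\le L^{1/2}(\vec p_\tau)\le 2\delta_\tau$, i.e.\ the claim with $C=2/\kappa$. I split the proof of the lower bound into a local and a global part.

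\emph{Locally}, $G$ is differentiable with derivative $DG(\vec p_{\mathrm o})=\partial_{\vec p}\vec y(\cdot;\vec p_{\mathrm o})$, given by the sensitivity equation. Since $G(\vec p_{\mathrm o})=0$, one has $\nabla^2L(\vec p_{\mathrm o})=2\,DG(\vec p_{\mathrm o})^{*}DG(\vec p_{\mathrm o})$. If this Hessian is invertible, $DG(\vec p_{\mathrm o})$ is injective, and $\|G(\vec p)\|\ge\kappa_0\|\vec p-\vec p_{\mathrm o}\|$ holds on a ball $B_r(\vec p_{\mathrm o})$ by Taylor expansion.

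\emph{Globally}, on the compact set $P_{\mathrm o}\setminus B_r(\vec p_{\mathrm o})$, continuity of $\vec p\mapsto\|G(\vec p)\|$ and uniqueness of the zero at $\vec p_{\mathrm o}$ (Assumption~\ref{assumptionB3}) give $\|G\|\ge m>0$. Since $\|\vec p-\vec p_{\mathrm o}\|_2\le\operatorname{diam}P_{\mathrm o}$, we may take $\kappa=\min\{\kappa_0,\,m/\operatorname{diam}P_{\mathrm o}\}$.

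\textbf{Main obstacle.} The difficult step is showing that $\nabla^2L(\vec p_{\mathrm o})$ is invertible, since Assumption~\ref{assumptionB1} only asserts invertibility of $\nabla^2L_\tau(\vec p_\tau)$ for $0<\tau\le\tau_*$. I would first write the sensitivity equations for $\partial_{\vec p}\vec y_\tau$ and $\partial_{\vec p}\vec y$, and run the Gr\"onwall argument of Lemma~\ref{lem:lipschitz} on them to get $\partial_{\vec p}\vec y_\tau(\cdot;\vec p_\tau)\to\partial_{\vec p}\vec y(\cdot;\vec p_{\mathrm o})$ in $\mathcal L^2$. This uses $\vec p_\tau\to\vec p_{\mathrm o}$ from Step~2, $S_\tau\to S$, and local boundedness of $\partial_{\vec y}F$ and $\partial_{\vec p}F$; if continuity of these derivatives in $S$ is needed, I would add it as a mild regularity hypothesis. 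Then $\nabla^2L_\tau(\vec p_\tau)\approx2\,D\vec y_\tau^{*}D\vec y_\tau+\mathcal O(L_\tau^{1/2}(\vec p_\tau))$, and the remainder vanishes because $L_\tau^{1/2}(\vec p_\tau)\le 2\delta_\tau+L^{1/2}(\vec p_{\mathrm o})\to0$. However, a limit of invertible matrices need not be invertible. If this fails, the fallback is to take uniform nondegeneracy, namely $\lambda_{\min}(\nabla^2_{\vec p}L_\tau(\vec p_\tau))\ge\lambda>0$ along the branch, as the intended reading of Assumption~\ref{assumptionB1}. This passes to the limit and gives $\kappa_0^2\ge\lambda/2$ up to $o(1)$.
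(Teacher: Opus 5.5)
Your part (i) and the convergence $\vec p_\tau\to\vec p_{\mathrm o}$ in (ii) are correct, and they use the paper's own chain $L^{1/2}(\vec p_\tau)\le L^{1/2}(\vec p_{\mathrm o})+2\delta_\tau$. You are more careful than the paper in two places: you take $\delta_\tau\to0$ from the uniform bound \eqref{L_diff_sq} rather than the $\mathcal{L}^2(P_{\mathrm o})$ bound of Corollary~\ref{cor:ytoL}, and you supply continuity of $L$ for the ``in particular'' clause.

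The gap is the linear rate, and the obstacle you flag is fatal rather than technical. Step~3 needs the sensitivity $Z(\cdot;\vec p_{\mathrm o})=\partial_{\vec p}\vec y(\cdot;\vec p_{\mathrm o})$ to be injective. Assumptions~\ref{assumptionB1}--\ref{assumptionB3} do not imply this, and without it the bound $\|\vec p_\tau-\vec p_{\mathrm o}\|_2\le C\delta_\tau$ is false. Here is a counterexample:
\begin{itemize}
\item Take $d_y=d_p=1$, $p_{\mathrm o}=2$, $F(y,p,S)=(p-p_{\mathrm o})^2+S$, $y(0)=0$, $S=\mathbf{1}_{[0,T]}$.
\item Then $y(t;p)-y_{\mathrm o}(t)=(p-p_{\mathrm o})^2t$ and $y_\tau(t;p)-y_{\mathrm o}(t)=(p-p_{\mathrm o})^2t+h_\tau(t)$, where $h_\tau(t)=\int_0^t(S_\tau-S)\,ds\le0$.
\item Assumption~\ref{assumptionB2} is trivial, Assumption~\ref{assumptionB3} holds with $c=\|t\|_{\mathcal{L}^2([0,T])}$, and $\delta_\tau=\|h_\tau\|_{\mathcal{L}^2([0,T])}$.
\item Minimizing $\|u\,t+h_\tau\|$ over $u=(p-p_{\mathrm o})^2\ge0$ gives $(p_\tau-p_{\mathrm o})^2=-\langle t,h_\tau\rangle/\|t\|^2\le1$, so $p_\tau\in P_{\mathrm o}$.
\item A direct computation with the Gaussian kernel shows that $-\langle t,h_\tau\rangle$ and $\|h_\tau\|$ are both of exact order $\sqrt{\tau}$.
\item Moreover $L_\tau''(p_\tau)=8\|t\|^2(p_\tau-p_{\mathrm o})^2>0$, so Assumption~\ref{assumptionB1} holds as well.
\end{itemize}
Hence $|p_\tau-p_{\mathrm o}|\asymp\delta_\tau^{1/2}$. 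The branch Hessians are all invertible but degenerate as $\tau\to0^+$, which is exactly the scenario you worried about.

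The paper's own proof does not establish the linear rate either. From quadratic growth it gets $c\|\vec p_\tau-\vec p_{\mathrm o}\|_2^2\le L^{1/2}(\vec p_\tau)-L^{1/2}(\vec p_{\mathrm o})\le2\delta_\tau$, i.e.\ $\|\vec p_\tau-\vec p_{\mathrm o}\|_2\le\sqrt{2/c}\,\delta_\tau^{1/2}$; its intermediate display drops a square root. This is the $\mathcal{O}(\delta_\tau^{1/2})$ rate announced in Step~B.4, and your Step~1 plus Assumption~\ref{assumptionB3} gives it in one line. Under the paper's hypotheses, the square-root rate is the correct statement.

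For the linear rate you must add a hypothesis, and your fallback is not the most economical one. Passing $\lambda_{\min}(\nabla^2_{\vec p}L_\tau(\vec p_\tau))\ge\lambda$ to $\tau=0$ requires controlling the curvature term $2\int_0^T\langle\partial^2_{\vec p}\vec y_\tau,\vec y_\tau-\vec y_{\mathrm o}\rangle\,dt$. That needs second-order sensitivities bounded uniformly in $\tau$, which the $C^1$ regularity of Assumption~\ref{assumptionB2} does not supply, on top of the continuity of $\partial_{\vec y}F,\partial_{\vec p}F$ in $S$ that you already noted. Assume instead first-order identifiability directly: $\int_0^T Z(t;\vec p_{\mathrm o})^{\top}Z(t;\vec p_{\mathrm o})\,dt\succ0$. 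Then your local/global argument in Step~3 goes through as written, using realizability $L(\vec p_{\mathrm o})=0$ essentially. It gives $C=2/\kappa$, a genuinely stronger conclusion than what the paper proves.
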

\begin{proof}
Since $\vec{p}_{\tau}$ minimizes $L_{\tau}^{1/2}$ on $P_{\mathrm{o}}$,
$$L_{\tau}^{1/2}(\vec{p}_{\tau}) \le L_{\tau}^{1/2}(\vec{p}_{\mathrm{o}}).$$
By definition of $\delta_{\tau}$, for any $\vec{p}\in P_{\mathrm{o}}$,
$$\big|L_{\tau}^{1/2}(\vec{p})-L^{1/2}(\vec{p})\big|\le \delta_{\tau}.$$
Hence,
\begin{align*}
L^{1/2}(\vec{p}_{\tau})
&\le \big|L^{1/2}(\vec{p}_{\tau})-L_{\tau}^{1/2}(\vec{p}_{\tau})\big| + L_{\tau}^{1/2}(\vec{p}_{\tau}) \\
&\le \delta_{\tau} + L_{\tau}^{1/2}(\vec{p}_{\mathrm{o}}) \\
&\le \delta_{\tau} + \big|L_{\tau}^{1/2}(\vec{p}_{\mathrm{o}})-L^{1/2}(\vec{p}_{\mathrm{o}})\big| + L^{1/2}(\vec{p}_{\mathrm{o}}) \\
&\le 2\delta_{\tau} + L^{1/2}(\vec{p}_{\mathrm{o}}).
\end{align*}
Since $\vec{p}_{\mathrm{o}}$ minimizes $L^{1/2}$, we also have $L^{1/2}(\vec{p}_{\mathrm{o}})\le L^{1/2}(\vec{p}_{\tau})$. Therefore,
$$0\le L^{1/2}(\vec{p}_{\tau})-L^{1/2}(\vec{p}_{\mathrm{o}})\le 2\delta_{\tau}.$$
Because $\delta_{\tau}\to0$ as $\tau\to0^+$ (by Corollary~\ref{cor:ytoL}), this proves (i).

Now assume Assumption~\ref{assumptionB3}. 
Since $L^{1/2}(\vec{p}_{\tau})-L^{1/2}(\vec{p}_{\mathrm{o}}) \ge c\|\vec{p}_{\tau}-\vec{p}_{\mathrm{o}}\|_2^2,$
we have
$$\|\vec{p}_{\tau}-\vec{p}_{\mathrm{o}}\|_2
\leq c^{-1/2}\big(L^{1/2}(\vec{p}_{\tau})-L^{1/2}(\vec{p}_{\mathrm{o}})\big)
\leq \sqrt{\frac{2}{c}}\delta_{\tau}^{1/2}.$$
Thus $\vec{p}_{\tau}\to\vec{p}_{\mathrm{o}}$ as $\tau\to0^+$, with the rate $\delta_{\tau}^{1/2}$.
\end{proof}

\newpage
\section{Error Estimate}
The previous section describes the limiting behavior of the ideal stationary branch $\{\vec{p}_{\tau}\}$ in the kernel-based smoothing framework for each smoothing level $\tau>0$. While that analysis assumes access to the exact local minimizer $\vec{p}_{\tau}$ of $L_{\tau}$, in practice the corrector step in DePCoN produces only numerical parameter estimates $\{\hat{p}_{\tau_n}\}_{n=1}^{N}$ obtained by running a finite number of iterations of a gradient-based optimizer. This iteration is terminated once a prescribed stopping criterion is satisfied, as described in \cref{subsec:corr_net},
\begin{equation}\label{stopping_criterion}
    \big\|\nabla_{\vec{p}}L_{\tau_n}(\hat{p}_{\tau_n})\big\|_2 \le \varepsilon_n,
\end{equation}
where $\varepsilon_n>0$ is a user-defined tolerance. As a result, the computed estimate $\hat{p}_{\tau_n}$ may differ from the exact minimizer $\vec{p}_{\tau_n}$, leading to a corrector error~\eqref{corrector_error}
\begin{equation}\label{corrector_error}
\vec{e}_{\tau_n} := \hat{p}_{\tau_n} - \vec{p}_{\tau_n}.
\end{equation}
at level $\tau_n$. In addition, even if the exact stationary point $\vec{p}_{\tau_n}$ were provided as input to the predictor, the learned network may not map it exactly to the next stationary point $\vec{p}_{\tau_{n-1}}$. We quantify this intrinsic predictor mismatch $\vec\beta_{\tau_n}$~\eqref{intrinsic_error} along the stationary branch by
\begin{equation}\label{intrinsic_error}
\vec\beta_{\tau_n} := f_\theta(\vec{p}_{\tau_n}) - \vec{p}_{\tau_{n-1}}.
\end{equation}
This term reflects the approximation accuracy of the predictor network with respect to the ideal one-step continuation along the stationary branch.

The predictor step in DePCoN propagates parameter estimates across smoothing levels from $\tau_{n}$ to $\tau_{n-1}$ (see \cref{subsec:pred_net}). Since this propagation is performed over a finite step size $\Delta\tau_{n} := \tau_{n} - \tau_{n-1}$, it introduces an additional $\tau$-discretization error. In the following, we analyze how these two sources of error accumulate across smoothing levels in terms of estimation accuracy.

The error analysis is organized into four steps. While the convergence results in \cref{convergence} describe the limiting behavior of the ideal stationary branch $\{\vec{p}_{\tau}\}$, the following steps quantify how closely the numerical estimates $\{\hat{p}_{\tau_n}\}$ track this branch under finite optimization accuracy and finite $\tau$-discretization.

\textbf{Step C.1 (Differentiability of $L_\tau$ for $\tau>0$).}
Under Assumption~\ref{assumptionB2} and Lemma~\ref{lem:mollify_Lp} (smoothness of $S_{\tau}$), for each fixed $\tau>0$, the trajectory map $\vec{p}\mapsto \vec{y}_{\tau}(\cdot;\vec{p})$ is continuously differentiable. Consequently, the loss $L_{\tau}(\vec{p})$ is continuously differentiable on $P_{\mathrm{o}}$, and $\nabla_{\vec{p}} L_{\tau}$ is well-defined. This justifies the stopping criterion~\eqref{stopping_criterion} and ensures that local Taylor expansions of $\nabla_{\vec{p}}L_\tau$ are valid. (Lemma~\ref{lem:Ltau_diff}.)

\textbf{Step C.2 (Local nondegeneracy and continuation stability).}
Assume that along the selected stationary branch $\{\vec{p}_{\tau}\}$, the Hessian $\nabla_{\vec{p}}^{2}L_{\tau}(\vec{p}_{\tau})$ is invertible for $\tau>0$ (local non-degeneracy condition). The stationary branch is then locally differentiable with respect to $\tau$ and satisfies a continuation equation of the form
$$
\frac{d}{d\tau}\vec{p}_{\tau}
=
- \big(\nabla_{\vec{p}}^{2}L_{\tau}(\vec{p}_{\tau})\big)^{-1}
\,\partial_{\tau}\nabla_{\vec{p}}L_{\tau}(\vec{p}_{\tau}).
$$
In particular, there exists a constant $C_n>0$ such that
$$
\|\vec{p}_{\tau_n}-\vec{p}_{\tau_{n-1}}\|_2
\le C_n\,|\Delta\tau_n|.
$$
This bound quantifies the intrinsic movement of the exact minimizers across smoothing levels. (Lemma~\ref{lem:branch_exist} and Corollary~\ref{cor:finite_step_difference})

\textbf{Step C.3 (Corrector error bound at each smoothing level).}
Let $\hat{p}_{\tau_n}$ satisfy~\eqref{stopping_criterion}. 
Under the local non-degeneracy condition in Step~C.2, we obtain the local estimate
$$
\|\vec{e}_{\tau_n}\|_2
\le \kappa_n\varepsilon_n,
$$
for sufficiently small $\varepsilon_n$, where $\vec e_{\tau_n}=\hat p_{\tau_n}-\vec p_{\tau_n}$. 
This step quantifies the deviation induced by finite optimization accuracy at level $\tau_n$. (Lemma~\ref{lem:corrector_bound_tau_n})

\textbf{Step C.4 (Predictor--corrector one-step error recursion).}
Combining the corrector bound from Step~C.3 with the intrinsic predictor mismatch $\vec\beta_{\tau_n}$, we obtain the one-step tracking estimate 
$$ \|\vec e_{\tau_{n-1}}\|_2 \le \kappa_{n-1}\varepsilon_{n-1} + L_n\|\vec e_{\tau_n}\|_2 + \|\vec\beta_{\tau_n}\|_2, $$
where
$$ \kappa_{n-1} := \left\| \big(\nabla_{\vec p}^2L_{\tau_{n-1}}(\vec p_{\tau_{n-1}}+\xi_{n-1}\vec e_{\tau_{n-1}})\big)^{-1} \right\|_2, \qquad \vec\beta_{\tau_n} := f_\theta(\vec p_{\tau_n})-\vec p_{\tau_{n-1}}. $$
This recursion describes how the corrector residual $\varepsilon_{n-1}$, the propagated error $\vec e_{\tau_n}$, and the predictor mismatch $\vec\beta_{\tau_n}$ jointly determine the tracking accuracy at the next smoothing level. (Theorem~\ref{thm:pc_error_recursion})

\begin{lemma}\label{lem:Ltau_diff} 
The loss
$L_{\tau}(\vec{p})
=
\int_{0}^{T}
\|\vec{y}_{\tau}(t;\vec{p})-\vec{y}_{\mathrm{o}}(t)\|_2^2
dt$ in~\eqref{L_tau_cont}
is continuously differentiable on $P_{\mathrm{o}}$, and 
$\nabla_{\vec{p}}L_{\tau}(\vec{p})$ is well-defined.
\end{lemma}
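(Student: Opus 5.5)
The plan is to prove Lemma~\ref{lem:Ltau_diff} by the classical theorem on differentiable dependence of ODE solutions on parameters, applied to the smoothed system~\eqref{de_tau} at a fixed $\tau>0$, and then to differentiate under the integral sign. The first step is to check that the vector field $G_\tau(t,\vec y,\vec p):=F(\vec y,\vec p,S_\tau(t))$ satisfies the hypotheses of that theorem on $[0,T]\times\mathbb{R}^{d_y}\times P_{\mathrm{o}}$. By Corollary~\ref{cor:heat_moli} and Lemma~\ref{lem:mollify_Lp}, $S_\tau\in C^\infty(\mathbb{R})$. In particular $S_\tau$ is continuous, and it is bounded on $[0,T]$; indeed $|S_\tau(t)|\le \|K_\tau\|_{\mathcal{L}^2}\|S\|_{\mathcal{L}^2}$ by Cauchy--Schwarz. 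Hence $G_\tau$ is continuous in $t$. By Assumption~\ref{assumptionB2}, it is globally Lipschitz in $\vec y$ and $C^1$ in $(\vec y,\vec p)$, with $\partial_{\vec y}F$ and $\partial_{\vec p}F$ locally bounded and continuous. It follows that for each $\vec p$ there is a unique global solution $\vec y_\tau(\cdot;\vec p)$ on $[0,T]$.

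The second step is to invoke the parameter-dependence theorem (Hartman, Ch.~V). It gives that $(t,\vec p)\mapsto \vec y_\tau(t;\vec p)$ is $C^1$, and that the sensitivity matrix $Z(t)=\partial_{\vec p}\vec y_\tau(t;\vec p)$ solves the variational equation
\[
\dot Z=\partial_{\vec y}F(\vec y_\tau,\vec p,S_\tau(t))\,Z+\partial_{\vec p}F(\vec y_\tau,\vec p,S_\tau(t)),\qquad Z(0)=0,
\]
where the initial condition holds because $\vec y_\tau(0)=\vec y_{\mathrm{o}}(0)$ does not depend on $\vec p$. If $\vec p$ lies on the boundary of $P_{\mathrm{o}}$, one works on a small open neighborhood, or uses one-sided derivatives; the local boundedness in Assumption~\ref{assumptionB2} is what permits this.

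The third step handles the integral. The trajectories $\vec y_\tau(t;\vec p)$ are uniformly bounded over $(t,\vec p)\in[0,T]\times P_{\mathrm{o}}$; this follows from a Gr\"onwall estimate using the linear growth implied by the Lipschitz condition together with compactness of $P_{\mathrm{o}}$. On the resulting compact range, $\partial_{\vec y}F$ and $\partial_{\vec p}F$ are bounded, so Gr\"onwall applied to the variational equation makes $Z$ uniformly bounded and continuous in $(t,\vec p)$. The integrand $\|\vec y_\tau(t;\vec p)-\vec y_{\mathrm{o}}(t)\|_2^2$ is therefore $C^1$ in $\vec p$, with derivative $2Z(t)^\top(\vec y_\tau-\vec y_{\mathrm{o}})$, which is continuous and dominated on $[0,T]\times P_{\mathrm{o}}$ ($\vec y_{\mathrm{o}}$ being continuous on $[0,T]$). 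Leibniz's rule then gives
\[
\nabla_{\vec p}L_\tau(\vec p)=2\int_0^T Z(t)^\top\big(\vec y_\tau(t;\vec p)-\vec y_{\mathrm{o}}(t)\big)\,dt .
\]
Continuity of this expression in $\vec p$ follows from uniform continuity of the integrand on the compact set.

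I expect the main obstacle to be regularity in $t$ rather than in $\vec p$. Assumption~\ref{assumptionB2} gives only a Lipschitz dependence of $F$ on $S$, so $G_\tau$ is only continuous, not differentiable, in $t$. The standard theorem needs only continuity in $t$ plus $C^1$ in the state and parameter variables, and I will check this point carefully. If a stronger version is needed, the fallback is the Carath\'eodory form of the theorem, which requires measurability in $t$ and an integrable bound on the derivatives, both of which hold here.
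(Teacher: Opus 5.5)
Your proposal is correct and follows the same route as the paper. Both arguments get differentiable dependence of $\vec y_\tau$ on $\vec p$ from the sensitivity equation $\dot Z=\partial_{\vec y}F\,Z+\partial_{\vec p}F$, $Z(0)=0$, which needs only continuity of $S_\tau$ in $t$, and then apply the chain rule to obtain $\nabla_{\vec p}L_\tau=2\int_0^T Z^\top(\vec y_\tau-\vec y_{\mathrm{o}})\,dt$; your uniform Gr\"onwall bounds on $[0,T]\times P_{\mathrm{o}}$ justifying differentiation under the integral, and your remark on boundary points of $P_{\mathrm{o}}$, simply make explicit details the paper leaves implicit.
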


\begin{proof}
Since $F$ is $C^{1}$ in $(\vec y,\vec p)$ (Assumption~\ref{assumptionB2}), the parameter sensitivity
$$Z_{\tau}(t;\vec p):=\partial_{\vec p}\vec y_{\tau}(t;\vec p)\in\mathbb{R}^{d_y\times d_p}
$$
exists and is characterized as the unique solution of the sensitivity equation
\begin{align}
\frac{d}{dt}Z_{\tau}(t;\vec p)
 &= 
\partial_{\vec y}F\!\big(\vec y_{\tau}(t;\vec p),\vec p,S_{\tau}(t)\big)\,Z_{\tau}(t;\vec p)
+\partial_{\vec p}F\!\big(\vec y_{\tau}(t;\vec p),\vec p,S_{\tau}(t)\big),\nonumber\\
Z_{\tau}(0;\vec p) &= \vec{0}.\label{eq:variational_Z}
\end{align}
Here $\partial_{\vec y}F(\cdot)$ and $\partial_{\vec p}F(\cdot)$ denote the Jacobians of $F$ with respect to $\vec y$ and $\vec p$, respectively.
Under Assumption~\ref{assumptionB2} and Lemma~\ref{lem:mollify_Lp}, the coefficients in~\eqref{eq:variational_Z} are continuous in $t$ along the trajectory $\vec y_{\tau}(t;\vec p)$, so~\eqref{eq:variational_Z} is a linear ODE with a unique solution on $[0,T]$. Hence the solution map $\vec p\mapsto \vec y_{\tau}(\cdot;\vec p)$ is continuously differentiable; see, e.g., \cite{khalil2002nonlinear}.

Consequently, for each fixed $\tau>0$, the loss $L_{\tau}(\vec p)
=
\int_{0}^{T}\|\vec y_{\tau}(t;\vec p)-\vec y_{\mathrm{o}}(t)\|_2^{2}\,dt$ is continuously differentiable with respect to $\vec p$ on $P_{\mathrm{o}}$. Then, by applying chain rule,
\begin{equation}\label{eq:grad_Ltau}
\nabla_{\vec p}L_{\tau}(\vec p)
=
2\int_{0}^{T}
Z_{\tau}(t;\vec p)^{\top}\big(\vec y_{\tau}(t;\vec p)-\vec y_{\mathrm{o}}(t)\big)\,dt.
\end{equation}
Since $Z_{\tau}(t;\vec p)$ is the solution of~\eqref{eq:variational_Z}, $\nabla_{\vec p}L_{\tau}(\vec p)$ is well-defined for all $\vec p\in P_{\mathrm{o}}$ and $\tau>0$.
\end{proof}

\begin{lemma}\label{lem:branch_exist}
Under Assumption~\ref{assumptionB1}, let $\{\vec p_\tau\}_{\tau\in(0,\tau_*]}$ be a family of stationary points satisfying $\nabla_{\vec p} L_\tau(\vec p_\tau)=\vec 0$, and assume that the Hessian $\nabla_{\vec p}^2 L_\tau(\vec p_\tau)$ exists and is invertible for all $\tau\in(0,\tau_*]$. Then the mapping $\tau \mapsto \vec p_\tau$ is locally continuously differentiable on $(0,\tau_*]$.  Moreover, $\vec p_\tau$ satisfies the continuation equation
$$\frac{d}{d\tau}\vec p_\tau = - \big(\nabla_{\vec p}^2 L_\tau(\vec p_\tau)\big)^{-1} \, \partial_\tau \nabla_{\vec p} L_\tau(\vec p_\tau).$$
\end{lemma}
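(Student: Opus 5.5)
The plan is to apply the implicit function theorem to the map
$$G:(0,\tau_*]\times P_{\mathrm{o}}\to\mathbb{R}^{d_p},\qquad G(\tau,\vec p):=\nabla_{\vec p}L_\tau(\vec p).$$
The stationary branch is exactly the zero set $G(\tau,\vec p_\tau)=\vec 0$. The partial Jacobian $\partial_{\vec p}G(\tau,\vec p_\tau)=\nabla^2_{\vec p}L_\tau(\vec p_\tau)$ is invertible by Assumption~\ref{assumptionB1}. So once $G$ is known to be $C^1$ jointly in $(\tau,\vec p)$ near each point $(\tau_0,\vec p_{\tau_0})$, the implicit function theorem gives a neighborhood $U\ni\tau_0$ and a unique $C^1$ map $\tau\mapsto\vec q(\tau)$ with $G(\tau,\vec q(\tau))=\vec 0$ and $\vec q(\tau_0)=\vec p_{\tau_0}$.

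To identify $\vec q(\tau)$ with the selected branch $\vec p_\tau$, I would use local uniqueness. The implicit function theorem guarantees that zeros of $G(\tau,\cdot)$ in a small ball around $\vec p_{\tau_0}$ are unique. Hence it suffices that $\vec p_\tau$ stays in that ball for $\tau$ near $\tau_0$. This is where the selection must be continuous, or at least chosen consistently; I would impose this as part of what "selected branch" means. Where needed, I would combine it with the isolatedness of nondegenerate stationary points noted after Assumption~\ref{assumptionB1}.

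With $\vec p_\tau=\vec q(\tau)$ locally, differentiate the identity $\nabla_{\vec p}L_\tau(\vec p_\tau)=\vec 0$ in $\tau$ by the chain rule:
$$\partial_\tau\nabla_{\vec p}L_\tau(\vec p_\tau)+\nabla^2_{\vec p}L_\tau(\vec p_\tau)\,\frac{d}{d\tau}\vec p_\tau=\vec 0.$$
Inverting the Hessian then gives the stated continuation equation.

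The main obstacle is verifying that $G$ is jointly $C^1$, since Lemma~\ref{lem:Ltau_diff} only gives $C^1$ in $\vec p$ for fixed $\tau$. I would proceed in three steps.

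\begin{enumerate}
\item \emph{Regularity of the input in $\tau$.} For $\tau>0$, $S_\tau(t)=(K_\tau*S)(t)$ is $C^\infty$ in $(t,\tau)$, because $\partial_\tau K_\tau=\partial_t^2K_\tau$ is integrable uniformly for $\tau$ in compact subsets of $(0,\infty)$. Dominated convergence then lets us differentiate under the integral.
\item \emph{Regularity of the trajectory and its sensitivities.} Treat $\tau$ as an extra parameter in the ODE~\cref{de_tau}. Standard ODE dependence on parameters then gives that $\vec y_\tau(t;\vec p)$ and $Z_\tau(t;\vec p)$ from~\eqref{eq:variational_Z} are $C^1$ in $(\tau,\vec p)$. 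This requires $F$ to be $C^1$ in $S$ and $C^2$ in $(\vec y,\vec p)$, which is implicit in assuming the Hessian exists and depends continuously on its arguments. I would state this strengthening of Assumption~\ref{assumptionB2} explicitly.
\item \emph{Regularity of $G$.} From formula~\eqref{eq:grad_Ltau} and differentiation under the integral over $[0,T]$, $G$ is $C^1$ in $(\tau,\vec p)$.
\end{enumerate}

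Finally, local continuous differentiability on all of $(0,\tau_*]$ follows by covering the interval with these local neighborhoods. At the endpoint $\tau_*$ the argument is one-sided, which requires extending slightly past $\tau_*$; this is harmless because $S_\tau$ is defined for all $\tau>0$.
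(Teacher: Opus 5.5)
Your proposal is correct and follows the same route as the paper: apply the implicit function theorem to $G(\vec p,\tau)=\nabla_{\vec p}L_\tau(\vec p)$ at points of the branch, then differentiate $G(\vec p_\tau,\tau)=\vec 0$ in $\tau$ to get the continuation equation. You are more careful than the paper, which infers joint $C^1$ regularity of $G$ in $(\vec p,\tau)$ from Lemma~\ref{lem:Ltau_diff} (which only gives $C^1$ in $\vec p$ at fixed $\tau$) and silently identifies the implicit-function curve with the selected branch, so your added hypotheses are exactly what the argument needs and what the stated assumptions do not supply: continuity of the selection, and extra smoothness of $F$, where you should also require $\partial_{\vec y}F$ and $\partial_{\vec p}F$ to be $C^1$ in $S$ so that the coefficients of the sensitivity equation for $Z_\tau$ are $C^1$ in $\tau$.
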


\begin{proof}
Define $G(\vec p,\tau) := \nabla_{\vec p} L_\tau(\vec p).$ By Lemma~\ref{lem:Ltau_diff} $L_\tau$ is continuously differentiable in $\vec p$, and hence
$G$ is continuously differentiable in $(\vec p,\tau)$ for $\tau>0$. Since $G(\vec p_\tau,\tau)=\vec 0$ and $\nabla_{\vec p} G(\vec p_\tau,\tau) = \nabla_{\vec p}^2 L_\tau(\vec p_\tau)$, the implicit function theorem implies 
$\vec p_\tau$ depends differentiably on $\tau$. That is, differentiating the identity
$G(\vec p_\tau,\tau)=\vec 0$
with respect to $\tau$ yields
$$\nabla_{\vec p}^2 L_\tau(\vec p_\tau) \frac{d}{d\tau}\vec p_\tau + \partial_\tau \nabla_{\vec p} L_\tau(\vec p_\tau) = \vec 0,$$
from which the continuation equation follows.
\end{proof}

\begin{corollary}\label{cor:finite_step_difference}
Under the setting of Lemma~\ref{lem:branch_exist}, the stationary branch $\tau\mapsto \vec p_\tau$ is locally Lipschitz on $(0,\tau_*)$. In particular, for any consecutive levels $\tau_{n-1}<\tau_n<\tau_*$, there exists $\xi_n\in(\tau_{n-1},\tau_n)$ such that
$$\vec p_{\tau_n}-\vec p_{\tau_{n-1}}=\frac{d}{d\tau}\vec p_{\xi_n}\,(\tau_n-\tau_{n-1}).$$
Consequently,
$$\|\vec p_{\tau_n}-\vec p_{\tau_{n-1}}\|_2\le C_n\,|\Delta\tau_n|,$$
where $\Delta\tau_n:=\tau_n-\tau_{n-1}$ and $C_n:=\sup_{\tau\in[\tau_{n-1},\tau_n]}\big\|\frac{d}{d\tau}\vec p_\tau\big\|_2<\infty$.
\end{corollary}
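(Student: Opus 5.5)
The plan is to derive both claims from Lemma~\ref{lem:branch_exist}: that lemma makes $\tau\mapsto\vec p_\tau$ $C^1$, and the bound on $\|\vec p_{\tau_n}-\vec p_{\tau_{n-1}}\|_2$ then follows from a mean-value argument plus continuity of the derivative on a compact interval.

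\emph{Local Lipschitz continuity.} Fix a compact subinterval $[a,b]\subset(0,\tau_*)$. By Lemma~\ref{lem:branch_exist}, the derivative
\[
\tfrac{d}{d\tau}\vec p_\tau=-\big(\nabla^2_{\vec p}L_\tau(\vec p_\tau)\big)^{-1}\partial_\tau\nabla_{\vec p}L_\tau(\vec p_\tau)
\]
exists and is continuous in $\tau$ on $[a,b]$. A continuous function on a compact set is bounded, so $\sup_{[a,b]}\|\tfrac{d}{d\tau}\vec p_\tau\|_2<\infty$. Applying the fundamental theorem of calculus componentwise gives, for $\tau,\tau'\in[a,b]$,
\[
\vec p_\tau-\vec p_{\tau'}=\int_{\tau'}^{\tau}\tfrac{d}{ds}\vec p_s\,ds,
\]
so the map is Lipschitz on $[a,b]$ with constant equal to that supremum. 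This proves local Lipschitz continuity.

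\emph{The mean-value representation.} The vector mean value theorem with a single intermediate point $\xi_n$ can fail for vector-valued maps when $d_p>1$. I will therefore proceed in one of two ways. The first is to read the stated identity componentwise, allowing one $\xi_n^{(i)}$ per coordinate, each given by the scalar MVT. The second, which I prefer, is to replace the identity by the integral form
\[
\vec p_{\tau_n}-\vec p_{\tau_{n-1}}=\Big(\int_0^1 \tfrac{d}{d\tau}\vec p_{\tau_{n-1}+s\Delta\tau_n}\,ds\Big)\Delta\tau_n,
\]
which is exact for vector-valued functions. Either route yields the inequality
\[
\|\vec p_{\tau_n}-\vec p_{\tau_{n-1}}\|_2\le C_n|\Delta\tau_n|,
\]
since the norm of the averaged derivative is at most its supremum. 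Finiteness of $C_n$ follows as above because $[\tau_{n-1},\tau_n]\subset(0,\tau_*)$ is compact and requires $\tau_{n-1}>0$.

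\emph{Main obstacle.} The real difficulty is justifying continuity of $\tau\mapsto\frac{d}{d\tau}\vec p_\tau$, that is, that $G(\vec p,\tau)=\nabla_{\vec p}L_\tau(\vec p)$ is jointly $C^1$, which Lemma~\ref{lem:branch_exist} uses. For $\tau>0$, $S_\tau=K_\tau*S$ is smooth in $(t,\tau)$ because the heat kernel is smooth for $\tau>0$. With $F$ being $C^1$, I would differentiate the sensitivity system~\eqref{eq:variational_Z} in $\tau$ to obtain continuity of the coefficients. This step needs $F$ to be differentiable in $S$ as well, and Assumption~\ref{assumptionB2} gives only Lipschitz continuity in $S$. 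My first attempt is to take this regularity as given, as the lemma implicitly does. Once it holds, continuity of the Hessian inverse on $[\tau_{n-1},\tau_n]$ follows from invertibility along the branch and continuity of matrix inversion, and this closes the argument.
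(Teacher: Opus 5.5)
Your argument is the one the paper intends. The paper gives no separate proof and treats the corollary as immediate from the $C^1$ branch of Lemma~\ref{lem:branch_exist}, the mean value theorem, and boundedness of a continuous derivative on the compact interval $[\tau_{n-1},\tau_n]$.

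Your objection to the single-$\xi_n$ identity is correct, and it is a flaw in the statement rather than in your proof. For $d_p>1$ the identity fails in general: $\vec p_\tau=(\cos\tau,\sin\tau)$ on $[0,2\pi]$ has zero increment but $\|\frac{d}{d\tau}\vec p_\tau\|_2=1$ everywhere. Your integral form is the right replacement and gives exactly $\|\vec p_{\tau_n}-\vec p_{\tau_{n-1}}\|_2\le C_n|\Delta\tau_n|$.

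Drop the componentwise alternative, though. The coordinatewise mean value theorem only tells you that each $i$-th entry of $\frac{d}{d\tau}\vec p_{\xi_n^{(i)}}$ is at most $C_n$ in absolute value. That yields the constant $\sqrt{d_p}\,C_n$, not $C_n$, so your justification via the "averaged derivative" does not apply to that route. If you want a single intermediate point, apply the scalar theorem to $\tau\mapsto\vec u^{\top}\vec p_\tau$, with $\vec u$ the unit vector along $\vec p_{\tau_n}-\vec p_{\tau_{n-1}}$. This gives $\|\vec p_{\tau_n}-\vec p_{\tau_{n-1}}\|_2\le\|\frac{d}{d\tau}\vec p_{\xi_n}\|_2\,|\Delta\tau_n|$.

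Two of your caveats are also right:
\begin{itemize}
\item $\tau_{n-1}>0$ is required, so the level $\tau_0=0$ used by DePCoN is not covered.
\item Assumption~\ref{assumptionB2} does not supply the regularity the lemma uses.
\end{itemize}
If you try to close that second gap, your sketch understates what is needed. Differentiating the sensitivity system in $\tau$ and obtaining a continuous Hessian both require $F$ to be $C^2$ in $(\vec y,\vec p,S)$, not merely $C^1$. You also need the selected stationary points to depend continuously on $\tau$. Otherwise the local $C^1$ curve from the implicit function theorem need not coincide with the selection, which could jump between isolated stationary points.

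For the corollary itself, taking the lemma's conclusion as given is legitimate.
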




\begin{lemma}\label{lem:corrector_bound_tau_n}
Fix $n\in\{0,\dots,N\}$ and let $\vec p_{\tau_n}$ be the selected stationary point on the branch in Lemma~\ref{lem:branch_exist} and let $\hat p_{\tau_n}$ be a parameter estimate from DePCoN. Define the error
$\vec e_{\tau_n}:=\hat p_{\tau_n}-\vec p_{\tau_n}$. Assume that $L_{\tau_n}$ is three times continuously differentiable in a neighborhood of $\vec p_{\tau_n}$.

Then there exists $\xi_{\tau_n}\in(0,1)$ such that
$$ \nabla_{\vec p}L_{\tau_n}(\hat p_{\tau_n}) = \nabla_{\vec p}^2L_{\tau_n} \big(\vec p_{\tau_n}+\xi_{\tau_n}\vec e_{\tau_n}\big) \,\vec e_{\tau_n}. $$
If the Hessian is invertible at $\vec p_{\tau_n}+\xi_{\tau_n}\vec e_{\tau_n}$, then
$$ \vec e_{\tau_n} = \nabla_{\vec p}^2L_{\tau_n} (\vec p_{\tau_n}+\xi_{\tau_n}\vec e_{\tau_n})^{-1} \nabla_{\vec p}L_{\tau_n}(\hat p_{\tau_n}),$$
and hence
$$ \|\vec e_{\tau_n}\|_2 \le \kappa_n\,\varepsilon_n, $$
where $\kappa_n := \left\| \nabla_{\vec p}^2L_{\tau_n} (\vec p_{\tau_n}+\xi_{\tau_n}\vec e_{\tau_n})^{-1} \right\|_2$.
\end{lemma}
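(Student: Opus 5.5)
The plan is a first-order Taylor expansion of the gradient map $G_n(\vec p):=\nabla_{\vec p}L_{\tau_n}(\vec p)$ around the selected stationary point $\vec p_{\tau_n}$. We then use $G_n(\vec p_{\tau_n})=\vec 0$ from Assumption~\ref{assumptionB1} and the stopping criterion~\eqref{stopping_criterion}.

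\textbf{Step 1 (segment stays in the good neighborhood).} By hypothesis $L_{\tau_n}$ is $C^3$, hence $G_n$ is $C^2$, on some ball $B_r(\vec p_{\tau_n})$. By Lemma~\ref{lem:Ltau_diff} and Step C.1 these derivatives are well defined for $\tau_n>0$. I will first restrict to the regime $\|\vec e_{\tau_n}\|_2<r$. This is the sense of ``for sufficiently small $\varepsilon_n$'' in Step C.3: the corrector is assumed to land in the basin of the selected branch point. Under this restriction the whole segment $\{\vec p_{\tau_n}+s\vec e_{\tau_n}: s\in[0,1]\}$ lies in $B_r(\vec p_{\tau_n})$. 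Shrinking $r$ if necessary, continuity of the Hessian and its invertibility at $\vec p_{\tau_n}$ (Assumption~\ref{assumptionB1}) give invertibility on all of $B_r(\vec p_{\tau_n})$, with a uniform bound on the inverse.

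\textbf{Step 2 (mean-value representation).} Set $\phi(s):=G_n(\vec p_{\tau_n}+s\vec e_{\tau_n})$ for $s\in[0,1]$. By the chain rule and the fundamental theorem of calculus,
$$G_n(\hat p_{\tau_n})-G_n(\vec p_{\tau_n})=\Big(\int_0^1\nabla^2_{\vec p}L_{\tau_n}(\vec p_{\tau_n}+s\vec e_{\tau_n})\,ds\Big)\vec e_{\tau_n}.$$
Since $G_n(\vec p_{\tau_n})=\vec 0$, this reduces to $G_n(\hat p_{\tau_n})=\bar H_n\vec e_{\tau_n}$, where $\bar H_n$ denotes the averaged Hessian.

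To reach the single-point form $\nabla^2_{\vec p}L_{\tau_n}(\vec p_{\tau_n}+\xi_{\tau_n}\vec e_{\tau_n})\vec e_{\tau_n}$ stated in the lemma, I would apply the scalar mean value theorem. This works directly when $d_p=1$. For $d_p>1$ I would apply it to each component $\phi_j$, which yields component-dependent points $\xi_{\tau_n}^{(j)}$. Alternatively, the intermediate point can be read as shorthand for the averaged Hessian.

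I expect this to be the main obstacle: a single common $\xi$ does not in general exist for vector-valued maps. If the exact form is needed, I would redefine $\kappa_n:=\|\bar H_n^{-1}\|_2$. All subsequent uses, including Step C.4, only require some matrix that is invertible with a controlled inverse.

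\textbf{Step 3 (inversion and bound).} By Step 1, every Hessian on the segment lies within any prescribed distance of the invertible matrix $\nabla^2_{\vec p}L_{\tau_n}(\vec p_{\tau_n})$ once $r$ is small. Hence $\bar H_n$ is invertible, and so is the single-point Hessian if one uses that form. This follows from the Neumann series: invertibility is an open condition, and $\bar H_n$ differs from $\nabla^2_{\vec p}L_{\tau_n}(\vec p_{\tau_n})$ by at most the supremum of the Hessian's variation on the ball. Therefore $\vec e_{\tau_n}=\bar H_n^{-1}G_n(\hat p_{\tau_n})$. Taking norms and applying $\|G_n(\hat p_{\tau_n})\|_2\le\varepsilon_n$ from~\eqref{stopping_criterion} gives $\|\vec e_{\tau_n}\|_2\le\kappa_n\varepsilon_n$.

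Finally, I would note that $\kappa_n\le 2\|\nabla^2_{\vec p}L_{\tau_n}(\vec p_{\tau_n})^{-1}\|_2$ for small $r$. So $\kappa_n$ is controlled by the nondegeneracy in Assumption~\ref{assumptionB1}, which is what Step C.4 requires.
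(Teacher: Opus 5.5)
Your outline matches the paper's proof: expand $\nabla_{\vec p}L_{\tau_n}$ along the segment from $\vec p_{\tau_n}$ to $\hat p_{\tau_n}$, remove the base term using $\nabla_{\vec p}L_{\tau_n}(\vec p_{\tau_n})=\vec 0$, invert, and apply the stopping criterion~\eqref{stopping_criterion}. The two differ exactly at the point you flagged. The paper gets a single intermediate point $\xi_{\tau_n}$ by citing a ``vector-valued mean value theorem.'' Your objection to that step is correct: for $d_p\ge 2$ no common $\xi$ need exist, even for smooth $L$, a nondegenerate minimizer, and arbitrarily small error.

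For instance, take $L(x,y)=\tfrac{1}{2}x^2+\tfrac{1}{2}y^2+\tfrac{1}{3}x^3+x^3y$. Then $\vec p=(0,0)$ is a stationary point with Hessian $I$. For $\hat p=(t,0)$ one gets $\nabla L(\hat p)=(t+t^2,\,t^3)$, while $\nabla^2L(st,0)\,(t,0)^{\top}=(t+2st^2,\,3s^2t^3)$. Matching both components requires $s=1/2$ and $s=1/\sqrt{3}$ at once, which is impossible for every $t\neq 0$.

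So the first displayed identity of the lemma, and the $\kappa_n$ defined from it, does not follow from the stated hypotheses. Your averaged Hessian $\bar H_n=\int_0^1\nabla^2_{\vec p}L_{\tau_n}(\vec p_{\tau_n}+s\vec e_{\tau_n})\,ds$, with $\kappa_n:=\|\bar H_n^{-1}\|_2$, is the form in which the lemma actually holds. The same replacement is needed for $\kappa_{n-1}$ in Theorem~\ref{thm:pc_error_recursion}. Your component-wise alternative is also valid, but it produces a matrix whose rows are Hessian rows taken at different points, so it needs the same invertibility argument.

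What your route costs is a proof that $\bar H_n$ is invertible, and your Step 3 supplies it correctly. This step is genuinely needed. Invertibility of the Hessian at one point does not imply invertibility of the average, nor does invertibility at every point of the segment, as assumed in Theorem~\ref{thm:pc_error_recursion}. When the Hessians are indefinite this can fail: $R(\theta)\,\mathrm{diag}(1,-1)\,R(\theta)^{\top}$ is invertible for every $\theta\in[0,\pi]$ but averages to zero.

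Your Neumann-series argument on a small ball closes this gap and gives the useful bound $\kappa_n\le 2\|\nabla^2_{\vec p}L_{\tau_n}(\vec p_{\tau_n})^{-1}\|_2$. Alternatively, positive definiteness at a nondegenerate local minimizer persists on a convex neighborhood and passes to averages. The price is the hypothesis $\|\vec e_{\tau_n}\|_2<r$. As you note, small $\varepsilon_n$ alone does not imply it, since the optimizer could stop near a different stationary point. In your version this hypothesis takes the place of the paper's conditional ``if the Hessian is invertible at the intermediate point.''
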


\begin{proof}
Since $\vec p_\tau$ is stationary, $G(\vec p_\tau)=\vec 0$.
Apply the (vector-valued) mean value theorem to $G$ along the segment
$\vec p_\tau + s\vec e_\tau$, $s\in[0,1]$:
there exists $\xi_\tau\in(0,1)$ such that
$$
G(\hat p_\tau)-G(\vec p_\tau)
=
\nabla_{\vec p}G(\vec p_\tau+\xi_\tau\vec e_\tau)\,\vec e_\tau.
$$
Recalling that $\nabla_{\vec p}G(\vec p)=\nabla_{\vec p}^2L_\tau(\vec p)$, we obtain
$$
\nabla_{\vec p}L_\tau(\hat p_\tau)
=
\nabla_{\vec p}^2L_\tau(\vec p_\tau+\xi_\tau\vec e_\tau)\,\vec e_\tau,
$$
which proves the claimed identity.

If $\nabla_{\vec p}^2L_\tau(\vec p_\tau+\xi_\tau\vec e_\tau)$ is invertible, then we can solve for $\vec e_\tau$:
$$
\vec e_\tau
=
\Big(
\nabla_{\vec p}^2L_\tau(\vec p_\tau+\xi_\tau\vec e_\tau)
\Big)^{-1}
\nabla_{\vec p}L_\tau(\hat p_\tau).
$$
Taking the Euclidean operator norm gives
$$
\|\vec e_\tau\|_2
\le
\left\|
\Big(
\nabla_{\vec p}^2L_\tau(\vec p_\tau+\xi_\tau\vec e_\tau)
\Big)^{-1}
\right\|_2
\,
\|\nabla_{\vec p}L_\tau(\hat p_\tau)\|_2.
$$
Finally, if $\|\nabla_{\vec p}L_\tau(\hat p_\tau)\|_2\le \varepsilon_n$, the stated bound
$\|\vec e_{\tau_n}\|_2\le \kappa_n\,\varepsilon_n$
follows.
\end{proof}



\paragraph*{Remark.} Since the predictor $f_\theta$ is a feed-forward network composed of affine maps and ReLU activations (see \cref{subsec:pred_net}), it is piecewise affine and hence Lipschitz continuous on any compact set. In particular, on the compact admissible set $P_{\mathrm{o}}$, there exists a constant $L\ge0$ such that
$$\|f_\theta(\vec p)-f_\theta(\vec q)\|_2\le L\|\vec p-\vec q\|_2,\qquad \forall\,\vec p,\vec q\in P_{\mathrm{o}}.$$

Under this property, we have the following theorem.
\begin{theorem}\label{thm:pc_error_recursion}
Fix $n\in\{1,\dots,N\}$. Assume the selected stationary branch $\{\vec p_{\tau_k}\}_{k=0}^N$ is well-defined, and suppose the corrector output $\hat p_{\tau_{n-1}}$ satisfies the stopping criterion
$$\|\nabla_{\vec p}L_{\tau_{n-1}}(\hat p_{\tau_{n-1}})\|_2\le \varepsilon_{n-1}.$$
Assume further that the Hessian is invertible along the line segment between $\vec p_{\tau_{n-1}}$ and $\hat p_{\tau_{n-1}}$, i.e.,
$$\nabla_{\vec p}^2L_{\tau_{n-1}}(\vec p_{\tau_{n-1}}+s\,\vec e_{\tau_{n-1}})\ \text{is invertible for all } s\in[0,1],$$
where $\vec e_{\tau_{n-1}}:=\hat p_{\tau_{n-1}}-\vec p_{\tau_{n-1}}$.

Then the tracking error satisfies the one-step bound
$$\|\vec e_{\tau_{n-1}}\|_2\le \kappa_{n-1}\varepsilon_{n-1}+L_n\|\vec e_{\tau_n}\|_2+\| \vec\beta_{\tau_n}\|_2,$$
where $$ \kappa_{n-1} := \left\| \big(\nabla_{\vec p}^2L_{\tau_{n-1}}(\vec p_{\tau_{n-1}}+\xi_{n-1}\vec e_{\tau_{n-1}})\big)^{-1} \right\|_2, \qquad \vec\beta_n := f_\theta(\vec p_{\tau_n})-\vec p_{\tau_{n-1}}.$$
\end{theorem}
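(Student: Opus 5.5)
The plan is to treat the bound as the sum of two independent pieces: a \emph{corrector} contribution at level $\tau_{n-1}$, coming from Lemma~\ref{lem:corrector_bound_tau_n}, and a \emph{predictor} contribution, coming from the Lipschitz property in the Remark preceding the theorem. Here $L_n$ is the Lipschitz constant of $f_\theta$ on $P_{\mathrm{o}}$.

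\textbf{Corrector piece.} Write $G_{n-1}(\vec p):=\nabla_{\vec p}L_{\tau_{n-1}}(\vec p)$. The branch is stationary, so $G_{n-1}(\vec p_{\tau_{n-1}})=\vec 0$. Applying the mean value argument of Lemma~\ref{lem:corrector_bound_tau_n} along the segment $\vec p_{\tau_{n-1}}+s\,\vec e_{\tau_{n-1}}$, $s\in[0,1]$, gives some $\xi_{n-1}\in(0,1)$ with
$G_{n-1}(\hat p_{\tau_{n-1}})=\nabla^2_{\vec p}L_{\tau_{n-1}}(\vec p_{\tau_{n-1}}+\xi_{n-1}\vec e_{\tau_{n-1}})\,\vec e_{\tau_{n-1}}$.
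By hypothesis the Hessian is invertible at every point of the segment, in particular at the intermediate point, so we can invert and use the stopping criterion to obtain $\|\vec e_{\tau_{n-1}}\|_2\le\kappa_{n-1}\varepsilon_{n-1}$.

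One technicality needs care. The vector-valued mean value theorem does not in general yield a single common $\xi$ for all components. If necessary, I would replace it by the integral form
$G_{n-1}(\hat p)-G_{n-1}(\vec p)=\big(\int_0^1\nabla^2_{\vec p}L_{\tau_{n-1}}(\vec p+s\vec e)\,ds\big)\vec e$,
and read $\kappa_{n-1}$ as the norm of the inverse of this averaged Hessian. This is the same convention already used in Lemma~\ref{lem:corrector_bound_tau_n}, so it is consistent with the paper.

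\textbf{Predictor piece.} The corrector at level $\tau_{n-1}$ is initialized at $f_\theta(\hat p_{\tau_n})$. Its distance to the target stationary point splits as
\begin{equation*}
f_\theta(\hat p_{\tau_n})-\vec p_{\tau_{n-1}}=\big[f_\theta(\hat p_{\tau_n})-f_\theta(\vec p_{\tau_n})\big]+\vec\beta_{\tau_n}.
\end{equation*}
Both $\hat p_{\tau_n}$ and $\vec p_{\tau_n}$ lie in $P_{\mathrm{o}}$: the former because bounds are imposed during optimization, the latter by Assumption~\ref{assumptionB1}. The Remark therefore gives
\begin{equation*}
\|f_\theta(\hat p_{\tau_n})-\vec p_{\tau_{n-1}}\|_2\le L_n\|\vec e_{\tau_n}\|_2+\|\vec\beta_{\tau_n}\|_2 .
\end{equation*}
This estimate is what justifies the invertibility hypothesis in practice. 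If $L_n\|\vec e_{\tau_n}\|_2+\|\vec\beta_{\tau_n}\|_2$ is small, the corrector starts inside the neighborhood of $\vec p_{\tau_{n-1}}$ where nondegeneracy holds (Assumption~\ref{assumptionB1} together with continuity of the Hessian). A descent method started there keeps $\hat p_{\tau_{n-1}}$ in that neighborhood.

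\textbf{Combining the pieces.} Adding the two nonnegative predictor terms to the corrector bound yields the stated inequality
\begin{equation*}
\|\vec e_{\tau_{n-1}}\|_2\le\kappa_{n-1}\varepsilon_{n-1}+L_n\|\vec e_{\tau_n}\|_2+\|\vec\beta_{\tau_n}\|_2 .
\end{equation*}
Formally, the inequality already follows from the first step alone, since the extra terms are nonnegative. The predictor terms record the mechanism by which the hypotheses of the first step are met, and they give the recursion that unrolls across levels.

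\textbf{Main obstacle.} The step I expect to be hardest is not the algebra but keeping the corrector within the basin where the Hessian along the segment stays invertible and $\kappa_{n-1}$ stays bounded. I would handle this by continuity: take $\varepsilon_{n-1}$, $\|\vec e_{\tau_n}\|_2$ and $\|\vec\beta_{\tau_n}\|_2$ small enough that the segment lies in a ball around $\vec p_{\tau_{n-1}}$ on which $\|(\nabla^2 L_{\tau_{n-1}})^{-1}\|_2\le 2\|(\nabla^2 L_{\tau_{n-1}}(\vec p_{\tau_{n-1}}))^{-1}\|_2$.
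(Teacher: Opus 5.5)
Your argument rests entirely on the corrector estimate $\|\vec e_{\tau_{n-1}}\|_2\le\kappa_{n-1}\varepsilon_{n-1}$; you say yourself that the predictor terms are only slack. That step is not established. You are right that the vector-valued mean value theorem gives no common $\xi_{n-1}$, but your replacement does not close the gap, for three reasons:
\begin{itemize}
\item Pointwise invertibility of the Hessian on the segment does not make the averaged Hessian $\int_0^1\nabla^2_{\vec p}L_{\tau_{n-1}}(\vec p_{\tau_{n-1}}+s\vec e_{\tau_{n-1}})\,ds$ invertible.
\item The norm of its inverse is not the $\kappa_{n-1}$ of the statement.
\item It is not the convention of Lemma~\ref{lem:corrector_bound_tau_n}, which uses the same single-$\xi$ form.
\end{itemize}
For mere nondegenerate stationary points, which is all the statement's ``stationary branch'' gives you, the estimate is actually false. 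Take $L(x,y)=-\frac{\cos 2\pi x}{4\pi^2}+\frac{1-\cos 2\pi x}{2\pi}\,y-\frac12\cos(2\pi x)\,y^2$. It has stationary points $(0,0)$ and $(1,0)$. Its Hessian at $(x,0)$ is the reflection $\bigl(\begin{smallmatrix}\cos 2\pi x&\sin 2\pi x\\ \sin 2\pi x&-\cos 2\pi x\end{smallmatrix}\bigr)$. This is invertible with inverse of norm $1$, and it averages to the zero matrix over $x\in[0,1]$. Yet $\|\vec e\|_2=1$ while the stopping criterion holds with $\varepsilon=0$.

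The missing ingredient is that the branch consists of local minimizers (Assumption~\ref{assumptionB1}). Then $\nabla^2_{\vec p}L_{\tau_{n-1}}(\vec p_{\tau_{n-1}})$ is positive semidefinite and invertible, hence positive definite. Since $H(s):=\nabla^2_{\vec p}L_{\tau_{n-1}}(\vec p_{\tau_{n-1}}+s\vec e_{\tau_{n-1}})$ is continuous and invertible on $[0,1]$, no eigenvalue can cross zero, so $H(s)\succ0$ throughout. Now apply the \emph{scalar} mean value theorem to $\phi(s)=\vec e_{\tau_{n-1}}^\top\nabla_{\vec p}L_{\tau_{n-1}}(\vec p_{\tau_{n-1}}+s\vec e_{\tau_{n-1}})$. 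For some $\xi_{n-1}\in(0,1)$,
\[
\lambda_{\min}\big(H(\xi_{n-1})\big)\|\vec e_{\tau_{n-1}}\|_2^2\le\vec e_{\tau_{n-1}}^\top H(\xi_{n-1})\,\vec e_{\tau_{n-1}}=\vec e_{\tau_{n-1}}^\top\nabla_{\vec p}L_{\tau_{n-1}}(\hat p_{\tau_{n-1}})\le\|\vec e_{\tau_{n-1}}\|_2\,\varepsilon_{n-1}.
\]
Since $\|H(\xi_{n-1})^{-1}\|_2=1/\lambda_{\min}(H(\xi_{n-1}))$, this is exactly $\|\vec e_{\tau_{n-1}}\|_2\le\kappa_{n-1}\varepsilon_{n-1}$, with a genuine single $\xi_{n-1}$. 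With this in hand, the ball/basin argument of your last paragraph becomes unnecessary. As written, that argument adds smallness hypotheses and relies on an unproved claim that the optimizer stays in the basin.

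The paper avoids the issue by reading the estimate differently. Its proof starts from $\hat p_{\tau_{n-1}}=f_\theta(\hat p_{\tau_n})$, the recursion of Section~\ref{subsec:pred_net}. So your ``predictor piece'' bounds $\vec e_{\tau_{n-1}}$ itself: $\|\vec e_{\tau_{n-1}}\|_2\le L_n\|\vec e_{\tau_n}\|_2+\|\vec\beta_{\tau_n}\|_2$, and $\kappa_{n-1}\varepsilon_{n-1}\ge0$ is the slack. Its closing step $\|\vec e_{\tau_{n-1}}\|_2\le\kappa_{n-1}\varepsilon_{n-1}+\|\vec e_{\tau_{n-1}}\|_2$ is trivial. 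The paper also derives the corrector bound, via the same single-$\xi$ step, but never actually uses it. That route needs only Lipschitz continuity of $f_\theta$ on $P_{\mathrm{o}}$. Your reading, with the corrector merely initialized at $f_\theta(\hat p_{\tau_n})$, fits the ``corrector output'' wording of the statement better. But then the stopping-criterion step is the only load-bearing one, and it must be repaired as above.
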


\begin{proof}
We start from the predictor recursion
$$ \hat p_{\tau_{n-1}} = f_\theta(\hat p_{\tau_n}), \qquad \vec e_{\tau_k}:=\hat p_{\tau_k}-\vec p_{\tau_k}. $$
Insert and subtract $f_\theta(\vec p_{\tau_n})$ to decompose the error at level $\tau_{n-1}$:
\begin{align*}
\vec e_{\tau_{n-1}}
&= \hat p_{\tau_{n-1}}-\vec p_{\tau_{n-1}}
= f_\theta(\hat p_{\tau_n})-\vec p_{\tau_{n-1}} \\
&= \big(f_\theta(\hat p_{\tau_n})-f_\theta(\vec p_{\tau_n})\big)
+ \big(f_\theta(\vec p_{\tau_n})-\vec p_{\tau_{n-1}}\big) \\
&= \big(f_\theta(\hat p_{\tau_n})-f_\theta(\vec p_{\tau_n})\big) + \vec\beta_n .
\end{align*}
Taking norms and using the assumed Lipschitz continuity of $f_\theta$ on $P_{\mathrm{o}}$
(with Lipschitz constant $L_n$ at this step) yields
$$ \|\vec e_{\tau_{n-1}}\|_2 \le L_n\|\hat p_{\tau_n}-\vec p_{\tau_n}\|_2 + \|\vec\beta_n\|_2 = L_n\|\vec e_{\tau_n}\|_2 + \|\vec\beta_n\|_2. $$
It remains to incorporate the contribution of the corrector stopping criterion at level $\tau_{n-1}$.

Since $\vec p_{\tau_{n-1}}$ lies on the selected stationary branch, it satisfies
$$\nabla_{\vec p}L_{\tau_{n-1}}(\vec p_{\tau_{n-1}})=\vec 0.$$
Apply the mean value theorem to the vector-valued map
$\nabla_{\vec p}L_{\tau_{n-1}}(\cdot)$ along the segment
$\vec p_{\tau_{n-1}} + s\,\vec e_{\tau_{n-1}}$ for $s\in[0,1]$:
there exists $\xi_{n-1}\in(0,1)$ such that
\begin{align*}
\nabla_{\vec p}L_{\tau_{n-1}}(\hat p_{\tau_{n-1}}) &= \nabla_{\vec p}L_{\tau_{n-1}}(\vec p_{\tau_{n-1}}) + \nabla_{\vec p}^2L_{\tau_{n-1}}(\vec p_{\tau_{n-1}}+\xi_{n-1}\vec e_{\tau_{n-1}})\,\vec e_{\tau_{n-1}} \\
&= \nabla_{\vec p}^2L_{\tau_{n-1}}(\vec p_{\tau_{n-1}}+\xi_{n-1}\vec e_{\tau_{n-1}})\,\vec e_{\tau_{n-1}}.
\end{align*}
By the assumed invertibility of the Hessian along the segment, we may solve for $\vec e_{\tau_{n-1}}$:
$$ \vec e_{\tau_{n-1}} = \big(\nabla_{\vec p}^2L_{\tau_{n-1}}(\vec p_{\tau_{n-1}}+\xi_{n-1}\vec e_{\tau_{n-1}})\big)^{-1} \nabla_{\vec p}L_{\tau_{n-1}}(\hat p_{\tau_{n-1}}). $$
Taking norms and using the stopping criterion
$\|\nabla_{\vec p}L_{\tau_{n-1}}(\hat p_{\tau_{n-1}})\|_2\le \varepsilon_{n-1}$
gives
$$ \|\vec e_{\tau_{n-1}}\|_2 \le \kappa_{n-1}\,\varepsilon_{n-1}, \qquad \kappa_{n-1} := \left\| \big(\nabla_{\vec p}^2L_{\tau_{n-1}}(\vec p_{\tau_{n-1}}+\xi_{n-1}\vec e_{\tau_{n-1}})\big)^{-1} \right\|_2. $$

Finally, combine the two bounds:
\begin{align*}
\|\vec e_{\tau_{n-1}}\|_2 &\le \kappa_{n-1}\varepsilon_{n-1} + \|\vec e_{\tau_{n-1}}\|_2 \\
&\le \kappa_{n-1}\varepsilon_{n-1} + L_n\|\vec e_{\tau_n}\|_2 + \|\vec\beta_n\|_2,
\end{align*}
which is the claimed one-step estimate.
\end{proof}

\newpage
\section{Supplementary tables and figures}

\begin{table*}[!ht]
\caption{Various applications of non-autonomous dynamical systems driven by discontinuous exogenous inputs.
Such abrupt or event-driven exogenous inputs frequently arise in real-world settings and pose significant challenges for learning and optimization in dynamical systems.}
\label{tab:nonauto_appl_full}
\resizebox{\linewidth}{!}{%
\begin{tabular}{cccc}
\toprule
Category & Application & Discontinuous External Input & Reference \\
\midrule
\multirow{3}{*}{1. Control Theory} & Traffic Signal Control & \begin{tabular}[c]{@{}c@{}}Periodic switching of traffic lights (Red/Green),\\ discrete gating of vehicle flows.\end{tabular} & \begin{tabular}[c]{@{}c@{}}\cite{33aboudolas2009store},\\ \cite{42du2014positive}\end{tabular} \\ 
 & \begin{tabular}[c]{@{}c@{}}Demand Response \\ \& Smart Thermostats\end{tabular} & \begin{tabular}[c]{@{}c@{}}Real-time price signals,\\ grid load shedding\end{tabular} & \begin{tabular}[c]{@{}c@{}}\cite{34mathieu2012using},\\ \cite{36chassin2015new}\end{tabular} \\ \cmidrule(rl){1-4}
\multirow{12}{*}{2. Life \& Ecological sciences} & Pharmacokinetics & \begin{tabular}[c]{@{}c@{}}Bolus injections, insulin pump delivery,\\ Michaelis-Menten elimination\end{tabular} & \begin{tabular}[c]{@{}c@{}}\cite{15huang2012modeling},\\ \cite{29song2014modeling},\\ \cite{30tang2007one}\end{tabular} \\
 & Bioprocess Engineering & \begin{tabular}[c]{@{}c@{}}Impulsive feeding,\\ fed-batch fermentation control\end{tabular} & \begin{tabular}[c]{@{}c@{}}\cite{23shen2016nonlinear},\\ \cite{24liu2022robust},\\ \cite{25gao2006nonlinear}\end{tabular} \\
 & Ecological Systems & \begin{tabular}[c]{@{}c@{}}Impulsive ecological interventions\\ (harvesting, pest control, nutrient pulses)\end{tabular} & \begin{tabular}[c]{@{}c@{}}\cite{4bainov1990population},\\ \cite{6meng2010dynamic},\\ \cite{7nie2009existence},\\ \cite{8nie2009dynamics}\end{tabular} \\
 & NF-$\kappa$B Signaling Pathways & External stimuli (e.g. TNF-$\alpha$) & \cite{41nelson2004oscillations} \\
 & Circadian Clock & \begin{tabular}[c]{@{}c@{}}Light/Dark pulses, and discrete data\\ sampling intervals from wearable devices\end{tabular} & \begin{tabular}[c]{@{}c@{}}\cite{39forger1999simpler},\\ \cite{44lim2025enhanced}\end{tabular} \\ \cmidrule(rl){1-4}
\multirow{10}{*}{3. Physics \& Engineering} & Aerospace & \begin{tabular}[c]{@{}c@{}}Instantaneous thrust,\\ impulsive maneuvers\end{tabular} & \begin{tabular}[c]{@{}c@{}}\cite{12carter1991optimal},\\ \cite{13carter2000necessary}\end{tabular} \\
 & Mechanics & \begin{tabular}[c]{@{}c@{}}Vibro-impact (collisions),\\ clock escapement energy, dry friction\end{tabular} & \begin{tabular}[c]{@{}c@{}}\cite{16brogliato1996nonsmooth},\\ \cite{26ibrahim2009vibro},\\ \cite{27moline2012model}\end{tabular} \\
 & Bipedal walking robot & Impact forces at foot–ground contact & \cite{37westervelt2003hybrid} \\ 
 & Waveguides & \begin{tabular}[c]{@{}c@{}}Structural discontinuities (conical/elliptical)\\ in THz waveguides\end{tabular} & \cite{28heydari2019analysis} \\
 & DC-DC Converters & \begin{tabular}[c]{@{}c@{}}High-frequency On/Off switching\\ of transistors\end{tabular} & \begin{tabular}[c]{@{}c@{}}\cite{43papafotiou2004hybrid},\\ \cite{45siddhartha2018non}\end{tabular} \\ 
 & Power Electronics & Switching events, component faults & \cite{22dominguez2010detection} \\ \cmidrule(rl){1-4}
\multirow{4}{*}{4. Economics \& Finance} & Market Dynamics & Demand–supply shocks & \begin{tabular}[c]{@{}c@{}}\cite{11stamova2004stability},\\ \cite{20stamov2011almost}\end{tabular} \\
 & Financial Control & \begin{tabular}[c]{@{}c@{}}Central bank interventions,\\ interest rate adjustments, transaction costs\end{tabular} & \begin{tabular}[c]{@{}c@{}}\cite{19cadenillas2000classical},\\ \cite{21bielecki2000risk},\\ \cite{38clarida2000monetary}\end{tabular} \\ \cmidrule(rl){1-4}
5. Neural Networks & Neural Dynamics & Spiking synaptic activity & \cite{10stamov2007almost} \\ \cmidrule(rl){1-4}
6. Epidemiology & \begin{tabular}[c]{@{}c@{}}Non-pharmaceutical\\Interventions (NPIs)\end{tabular} & Discrete policy shocks & \cite{40flaxman2020estimating} \\
\bottomrule
\end{tabular}%
}
\end{table*}
\newpage
\begin{figure}[t!]
  \begin{center}
    \centerline{\includegraphics[width=\linewidth]{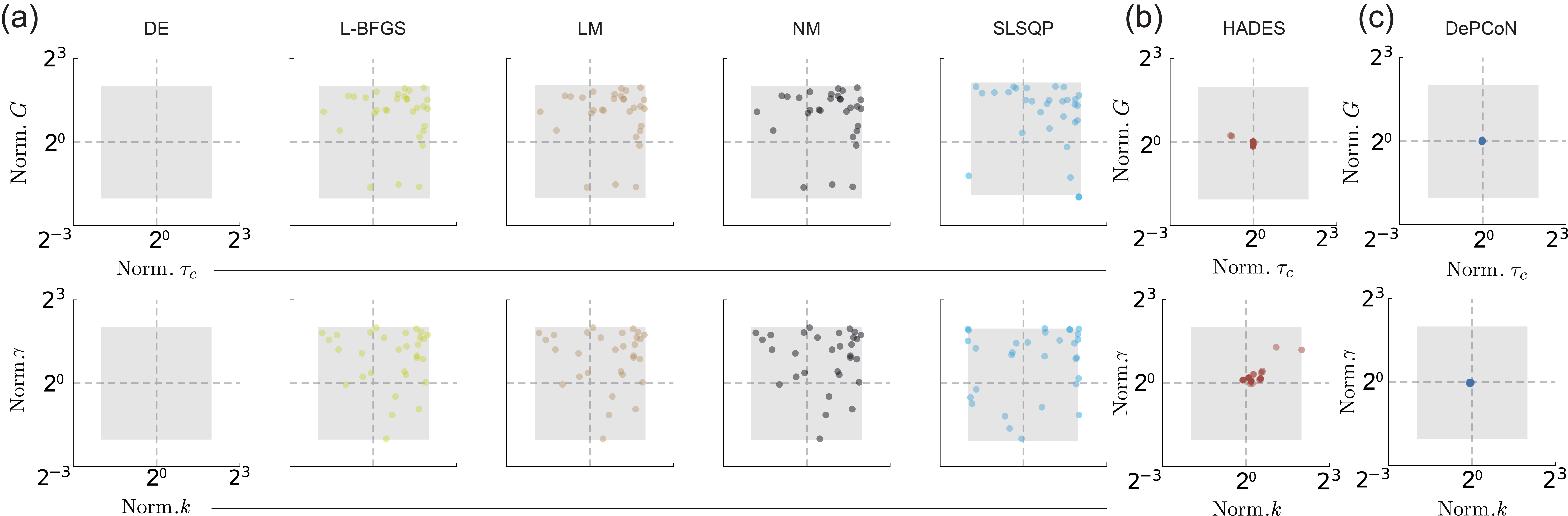}}
    \caption{Comparison of parameter estimation methods on the human circadian pacemaker model introduced in~\cref{fig:1}.
The scatter plots summarize parameter estimates from 30 independent trials for each of seven methods
(\emph{DE, L-BFGS, LM, NM, SLSQP, HADES-NN, and DePCoN}). All parameter estimates are normalized
by the true parameter values, which are located at the intersection of the dashed lines, and gray boxes indicate the parameter ranges used for random initialization.
DE failed to produce valid parameter estimates due to numerical divergence during the search process.
(a) Under discontinuous exogenous inputs, the resulting non-smooth loss landscape causes \emph{L-BFGS}, \emph{LM}, and \emph{NM} to remain trapped near their initializations.
(b) HADES-NN achieves improved estimation performance relative to these baselines,
whereas (c) DePCoN consistently and accurately recovers the ground-truth parameter values across trials. 
}
    \label{fig:5}
  \end{center}
\end{figure}
\begin{figure}[h!]
  \begin{center}
    \centerline{\includegraphics[width=\linewidth]{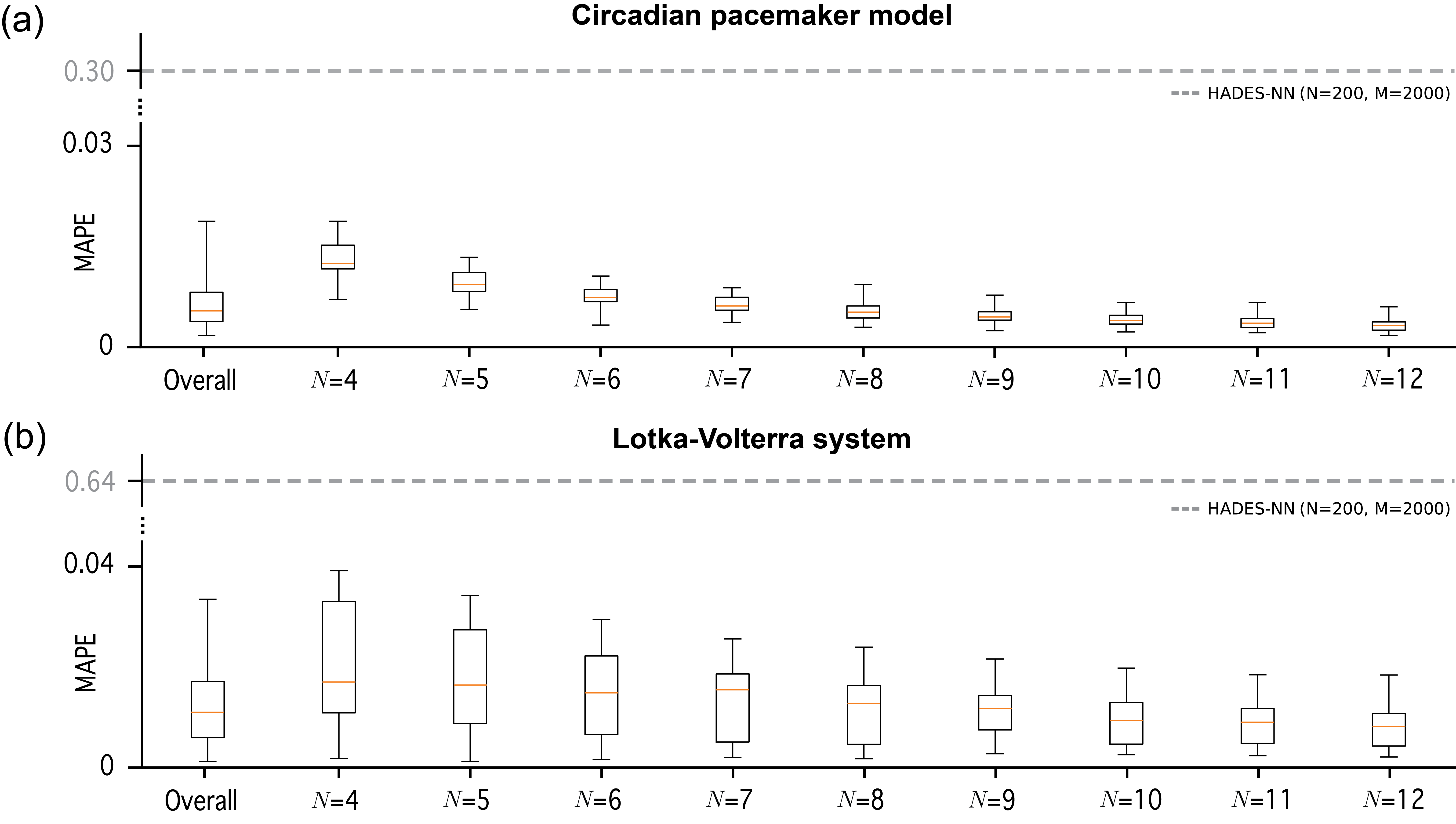}}
    \caption{Sensitivity of DePCoN to the smoothing-grid size $N$.
We evaluate the robustness of DePCoN with respect to the number of smoothing scales $N$ using box plots of the mean absolute percentage error (MAPE).
\textbf{(a)} Circadian pacemaker model~\cref{fig:1}-(a).
\textbf{(b)} Lotka--Volterra system~\cref{fig:4}-(a).
For each $N \in \{4,\ldots,12\}$, we perform 30 independent trials with random initial parameters sampled from $[p_{\mathrm{o},i}/4,\,4p_{\mathrm{o},i}]$ for $i=1,\ldots,4$.
In each trial, the MAPE of the estimated parameters is computed, and its distribution is summarized as a box plot for each $N$.
The leftmost ``Overall'' box plot aggregates MAPE values across all smoothing-grid sizes as a reference.
The horizontal dashed line indicates the MAPE level achieved by \emph{HADES-NN} under its reported hyperparameter setting, providing a baseline for comparison.
Across both systems, DePCoN maintains stable MAPE distributions over a wide range of $N$, consistently matching or improving upon the HADES-NN baseline, which demonstrates strong robustness to variations in the smoothing-grid hyperparameter.}
    \label{fig:6}
  \end{center}
\end{figure}
\end{document}